\setlist{itemsep=0pt, topsep=0pt}
\newcommand{\floor}[1]{\lfloor#1\rfloor}
\newcommand{\ceiling}[1]{\lceil#1\rceil}
\newcommand{\bigfloor}[1]{\left\lfloor#1\right\rfloor}
\newcommand{\set}[1]{\left\{ #1 \right\}}
\newcommand{\cG}{\mathcal{G}}
\newcommand{\cS}{\mathcal{S}}
\newcommand{\tbf}[1]{\textbf{#1}}
\newtheorem{theorem}{Theorem}[section]
\newtheorem{corollary}[theorem]{Corollary}
\newtheorem{lemma}[theorem]{Lemma}
\newtheorem{claim}[theorem]{Claim}
\newtheorem{observation}[theorem]{Observation}
\newtheorem{example}[theorem]{Example}
\newtheorem{problem}[theorem]{Problem}
\newenvironment{proofclaim}[1][Proof of claim]{\begin{proof}[#1]}{\end{proof}}
\newcommand{\ep}{\epsilon}
\newcommand{\bip}{\mathrm{bip}}
\newcommand{\of}[1]{\left( #1 \right)}
\title{On the multicolor Ramsey numbers of balanced double stars}
\author{Deepak Bal\thanks{Department of Mathematics, Montclair State University, Montclair, NJ {\tt deepak.bal@montclair.edu}.}, Louis DeBiasio\thanks{Department of Mathematics, Miami University, Oxford, OH. \texttt{debiasld@miamioh.edu}. Research supported in part by NSF grant DMS-1954170.}, Ella Oren-Dahan\thanks{Department of Mathematics, Montclair State University, Montclair, NJ. {\tt orendahane1@montclair.edu}}}
\date{\today}
\begin{document}

\maketitle

\begin{abstract}
The balanced double star on $2n+2$ vertices, denoted $S_{n,n}$, is the tree obtained by joining the centers of two disjoint stars each having $n$ leaves.  Let $R_r(G)$ be the smallest integer $N$ such that in every $r$-coloring of the edges of $K_N$ there is a monochromatic copy of $G$, and let $R_r^{\bip}(G)$ be the smallest integer $N$ such that in every $r$-coloring of the edges of $K_{N,N}$ there is a monochromatic copy of $G$.  It is known that $R_2(S_{n,n})=3n+2$ \cite{GHK} and $R_2^{\bip}(S_{n,n})=2n+1$ \cite{HJ}, but very little is known about $R_r(S_{n,n})$ and $R^{\mathrm{bip}}_r(S_{n,n})$ when $r\geq 3$ (other than the bounds which follow from considerations on the number of edges in the majority color class).

In this paper we prove the following for all $n\geq 1$ (where the lower bounds are adapted from existing examples):
\begin{itemize}
\item $(r-1)2n+1\leq R_r(S_{n,n})\leq (r-\frac{1}{2})(2n+2)-1$, and
\item $(2r-4)n+1\leq R^{\mathrm{bip}}_r(S_{n,n})\leq (2r-3+\frac{2}{r}+O(\frac{1}{r^2}))n.$
\end{itemize}
These bounds are similar to the best known bounds on $R_r(P_{2n+2})$ and $R_r^{\mathrm{bip}}(P_{2n+2})$, where $P_{2n+2}$ is a path on $2n+2$ vertices (which is also a balanced tree).

We also give an example which improves the lower bound on $R^{\mathrm{bip}}_r(S_{n,n})$ when $r=3$ and $r=5$.  
\end{abstract}

\section{Introduction}

A double star is a tree obtained by joining the centers of two disjoint stars.  Let $S_{k,l}$ be the double star obtained by joining the centers of two disjoint stars having $k$ and $l$ leaves respectively.  So $|V(S_{k,l})|=k+l+2$.  Let $\cS_n$ be the family of all double stars on $n$ vertices. 

Given a graph $K$, a family of graphs $\cG$, and a positive integer $r$, we write $K\to_{r} \cG$ to mean that in every $r$-coloring of the edges of $K$ there is a monochromatic copy of some $G\in \cG$.  We let $R_r(\cG)$ be the smallest integer $N$ such that $K_N\to_r \cG$.  When $\cG$ consists of a single graph $G$, we simply write $K\to_r G$ and $R_r(G)$.  As is customary, we drop the subscript when $r=2$.

\subsection{The family of all double stars}

Gy\'arf\'as \cite{G77} proved that in every $r$-coloring of $K_n$ there is a monochromatic component of order at least $\frac{n}{r-1}$, which is best possible whenever an affine plane of order $r-1$ exists.  It turns out that a stronger statement is true; in every $r$-coloring of $K_n$ either there is a monochromatic component of order $n$ in every color, or there is a monochromatic double star of order at least $\frac{n}{r-1}$ (see \cite{LMP}, \cite{M}).  This led Gy\'arf\'as to raise the following problem. 

\begin{problem}[Gy\'arf\'as \cite{G11}] For all $r\geq 3$, is it true that in every $r$-coloring of the edges of $K_n$, there is a monochromatic double star of order at least $\frac{n}{r-1}$?  Equivalently, is it true that 
$$R_r(\cS_{2n+2})\leq (r-1)(2n+2)?$$
\end{problem}

Gy\'arf\'as and S\'ark\"ozy \cite{GS08} proved the following weaker bound for all $r\geq 2$,
\begin{equation}\label{eq:GS}
R_r(\cS_{2n+2})\leq \left(r-1+\frac{1}{r+1}\right)(2n+2)-\frac{r-1}{r+1}.
\end{equation}

This bound was later improved by S\'ark\"ozy \cite{S22} who proved that for all $r\geq 3$, there exists $0<\ep=O(\frac{1}{r^9})$ such that $$R_r(\cS_{2n+2})\leq \left(r-1+\frac{1}{r+1}-\ep\right)(2n+2).$$

\subsection{Balanced double stars}

A result of Grossman, Harary and Klawe \cite{GHK} implies $R(S_{n,n})=3n+2$ (see Section \ref{sec:conclusion} for further discussion about their general result).  For more than 2 colors, essentially nothing is known directly about $R_r(S_{n,n})$.  However, there are some existing extremal results which provide the following upper and lower bounds for all $r\geq 3$, 
\begin{equation}\label{eq:bounds}
(r-1)2n+1 \leq  R_r(S_{n,n})\leq r\cdot2n+2.
\end{equation}

First we describe the lower bound examples, both of which are well known.  The second example \cite{SYXL} is typically stated for paths, but we phrase it more generally here.

\begin{example}\label{ex:path} Let $r\geq 3$ be an integer.
\begin{enumerate}
\item If an affine plane of order $r-1$ exists and $r-1$ divides $2n$, then for every connected graph $G$ on $2n+2$ vertices, $R_r(G)\geq (r-1)2n+2$.
\item If $G$ is a balanced bipartite graph on $2n+2$ vertices, then $R_r(G)\geq (r-1)2n+1$.
\end{enumerate}
\end{example}

\begin{proof}
(i) Whenever an affine plane of order $r-1$ exists and $r-1$ divides $2n$, we blow up each of the $(r-1)^2$ points of the affine plane into a set of order $\frac{2n}{r-1}$ (allowing one of the sets to have order $\frac{2n}{r-1}+1$) to get an $r$-coloring where each color class consists of components of order $2n$ and one component of order $2n+1$.  

(ii) For all $r\geq 3$, the example is as follows.  Take $2r-2$ sets of order $n$, call them $X_1, X_2, \dots, X_{2r-2}$.  For all $i\in [r-1]$, color all edges inside $X_{i}\cup X_{r-1+i}$ with color $r$.  For all $i\in [r-1]$ color all edges from $X_{i}$ to $X_{i+1}\cup \dots \cup X_{i+r-2}$ and all edges from $X_{i+r-1}$ to $X_{i+r}\cup \dots \cup X_{i+2r-2}$ with color $i$.  Note that this decomposes the edges of $K_n$ into cliques of order $2n$ and complete bipartite graphs $K_{n,(r-2)n}$.  Thus we have no monochromatic copy of any balanced bipartite graph on $2n+2$ vertices.  
\end{proof}

The upper bound in \eqref{eq:bounds} follows from a known special case of the Erd\H{o}s-S\'os conjecture.

\begin{observation}\label{obs:ES}
For all $r\geq 1$, if $G$ is a graph on $r\cdot2n+2$ vertices with $e(G)\geq \frac{1}{r}\binom{r\cdot2n+2}{2}$, then $S_{n,n}\subseteq G$.
\end{observation}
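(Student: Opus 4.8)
The plan is to prove a little more than is stated, namely that \emph{every} graph $G$ on $N$ vertices with $e(G)>nN$ contains $S_{n,n}$; this is exactly the instance of the Erd\H{o}s--S\'os conjecture for balanced double stars, and it has a short direct argument. Since for $N=r\cdot 2n+2$ one computes $\frac{1}{r}\binom{N}{2}=nN+n+\frac{1}{r}$, the hypothesis $e(G)\geq \frac{1}{r}\binom{N}{2}$ indeed implies $e(G)>nN$, so this suffices (the case $r=1$ being $G=K_{2n+2}$).

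The only structural input I will use is a one-line embedding lemma: \textbf{if $G$ contains an edge $wx$ with $\deg(w)\geq 2n+1$ and $\deg(x)\geq n+1$, then $S_{n,n}\subseteq G$.} To see this, first pick a set $B\subseteq N(x)\setminus\{w\}$ with $|B|=n$, which is possible because $\deg(x)\geq n+1$; then, using $|N(w)\setminus\{x\}|\geq 2n$, pick a set $A\subseteq\bigl(N(w)\setminus\{x\}\bigr)\setminus B$ with $|A|=n$. By construction $A$, $B$, $\{w\}$, $\{x\}$ are pairwise disjoint, $w$ is adjacent to $x$ and to every vertex of $A$, and $x$ is adjacent to every vertex of $B$, which is precisely a copy of $S_{n,n}$ with centers $w$ and $x$.

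Now assume $e(G)>nN$ and, for contradiction, $S_{n,n}\not\subseteq G$. By the embedding lemma, every vertex of degree at least $2n+1$ has all of its neighbours of degree at most $n$. Write $B$ for the set of vertices of degree $\geq 2n+1$ and $S$ for the set of vertices of degree $\leq n$; these are disjoint. If $B=\emptyset$ then $2e(G)=\sum_v\deg(v)\leq 2nN$, contradicting $e(G)>nN$. Otherwise $B$ is an independent set and every edge meeting $B$ has its other endpoint in $S$, so $\sum_{v\in B}\deg(v)=e(B,S)\leq\sum_{v\in S}\deg(v)\leq n|S|$; combining this with $\deg(v)\leq n$ for $v\in S$ and $\deg(v)\leq 2n$ for $v\notin B\cup S$ gives
\begin{align*}
2e(G) &= \sum_{v\in B}\deg(v)+\sum_{v\in S}\deg(v)+\sum_{v\notin B\cup S}\deg(v)\\
&\leq n|S|+n|S|+2n\bigl(N-|B|-|S|\bigr)=2nN-2n|B|\leq 2nN-2n,
\end{align*}
so $e(G)\leq nN-n$, again contradicting $e(G)>nN$. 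Hence $S_{n,n}\subseteq G$, which proves the observation.

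I do not expect a genuine obstacle here: essentially all of the content is in the embedding lemma, where the key point is to choose the $n$ leaves at the lower-degree center $x$ first and then absorb them into the surplus of at least $2n$ available neighbours of $w$, so that no conflicts arise. The only thing to be careful about afterwards is the elementary degree count — in particular, noting that the set $B$ of high-degree vertices is independent and sends all of its incident edges into the set $S$ of low-degree vertices.
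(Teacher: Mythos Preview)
Your proof is correct. Both you and the paper rely on the same embedding fact (an edge $uv$ with $\deg(u)\ge 2n+1$ and $\deg(v)\ge n+1$ gives $S_{n,n}$), but you reach it by a different route. The paper simply observes that the average degree exceeds $2n$, invokes the folklore Lemma~\ref{lem:avdeg} to pass to a subgraph $G'$ with average degree $>2n$ and minimum degree $>n$, and then takes any vertex of degree $\ge 2n+1$ in $G'$ together with any neighbour. You instead argue directly by partitioning $V(G)$ into $B=\{v:\deg(v)\ge 2n+1\}$, $S=\{v:\deg(v)\le n\}$, and the rest, and count edges. Your approach avoids appealing to Lemma~\ref{lem:avdeg} and actually foreshadows the $L/M/S$ decomposition the paper uses later in Observation~\ref{obs:SL} and Lemma~\ref{emptyLall}; the paper's approach is a line shorter but relies on that auxiliary lemma. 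Either way the content is the same.
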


We give the proof for expository purposes and we begin by stating the following well-known folklore lemma (which is proved by deleting vertices of degree at most $d/2$ until we are left with the desired subgraph).

\begin{lemma}\label{lem:avdeg}
If $G$ is a graph with average degree at least $d>0$, then $G$ has a subgraph $G'$ with average degree at least $d$ and minimum degree greater than $d/2$.  
\end{lemma}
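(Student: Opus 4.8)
The plan is to run the obvious greedy deletion process and use the average degree as a monovariant. Set $G_0 := G$. Given $G_i$, if it contains a vertex $v$ with $\deg_{G_i}(v) \le d/2$, let $G_{i+1} := G_i - v$; otherwise, stop and output $G' := G_i$. Since each step deletes a vertex of a finite graph, the process terminates after finitely many steps, and the stopping rule guarantees that the output $G'$ has $\delta(G') > d/2$. So the whole content of the lemma is that $G'$ still has average degree at least $d$ and is nonempty.

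The crux is the claim that \emph{every} graph $G_i$ arising in this process has average degree at least $d$, equivalently $2e(G_i) \ge d\cdot |V(G_i)|$. This holds for $G_0 = G$ by hypothesis. For the inductive step, if $G_{i+1} = G_i - v$ with $\deg_{G_i}(v) \le d/2$, then
\[
2e(G_{i+1}) = 2e(G_i) - 2\deg_{G_i}(v) \ge d\cdot|V(G_i)| - d = d\cdot|V(G_{i+1})|,
\]
so the invariant is preserved, and deleting a low-degree vertex can never destroy the average-degree condition.

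To finish, I would observe two things. First, $G$ is nonempty: average degree at least $d > 0$ forces $e(G) \ge 1$, hence $|V(G)| \ge 2$. Second, the deletion process never reaches a graph on a single vertex, since such a graph has $2e = 0$ while $d\cdot 1 = d > 0$, contradicting the invariant established above; hence the output $G'$ satisfies $|V(G')| \ge 2$ and is a genuine subgraph with the average degree of $G'$ well defined and at least $d$. Combining this with the stopping rule $\delta(G') > d/2$ gives the lemma. The only subtlety — and precisely where the strict hypothesis $d > 0$ is used — is ruling out the degenerate outcome in which the process empties the graph; everything else is immediate, so I do not expect any real obstacle.
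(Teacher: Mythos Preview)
Your proof is correct and is exactly the standard deletion argument the paper has in mind; the paper only sketches it parenthetically (``proved by deleting vertices of degree at most $d/2$ until we are left with the desired subgraph''), and your write-up fills in precisely those details, including the use of $d>0$ to rule out the process emptying the graph.
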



\begin{proof}[Proof of Observation \ref{obs:ES}]
We have that the average degree of $G$ is at least $\frac{2\frac{1}{r}\binom{r\cdot2n+2}{2}}{r\cdot2n+2}=2n+\frac{1}{r}$.  So by Lemma \ref{lem:avdeg}, $G$ has a subgraph $G'$ with average degree greater than $2n$ and minimum degree greater than $n$.  In $G'$, let $u$ be a vertex of degree at least $2n+1$ and let $v$ be any neighbor of $u$.  Since $d(v)\geq n+1$ we get a copy of $S_{n,n}$ with centers $u$ and $v$.
\end{proof}

Our first main result is an improvement on the upper bound in \eqref{eq:bounds}.

\begin{theorem}\label{thm:main1}
For all $r\geq 2$ and $n\geq 1$, $R_r(S_{n,n})\leq (r-\frac{1}{2})(2n+2)-1$.
\end{theorem}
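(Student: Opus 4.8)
The plan is to prove the bound by induction on $r$. The base case is $r=2$: here $(r-\tfrac12)(2n+2)-1 = 3n+2$, which is exactly $R_2(S_{n,n})$ by the theorem of Grossman, Harary and Klawe \cite{GHK}, so nothing is left to prove. For $r\ge 3$ it suffices to establish the recursive inequality
\[
R_r(S_{n,n}) \;\le\; R_{r-1}(S_{n,n}) + (2n+2),
\]
since iterating it down to $r=2$ yields $R_r(S_{n,n}) \le (3n+2) + (r-2)(2n+2) = (2r-1)(n+1)-1 = (r-\tfrac12)(2n+2)-1$. (When $n$ is small compared to $r$ — precisely whenever $n\le 2r-4$ — one can skip the induction entirely: then $(2r-1)(n+1)-1 \ge r\cdot 2n+2$, so restricting an $r$-coloring of $K_{(2r-1)(n+1)-1}$ to $r\cdot2n+2$ vertices and applying Observation \ref{obs:ES} to the majority color already gives a monochromatic $S_{n,n}$.)

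To prove the recursion, fix an $r$-coloring of $K_N$ with $N = R_{r-1}(S_{n,n})+2n+2$ and suppose there is no monochromatic $S_{n,n}$. The basic structural fact I would use is a sharpening of the computation behind Observation \ref{obs:ES}: \emph{if a graph $G$ has a vertex $u$ with $\deg_G(u)\ge 2n+1$ that is adjacent to a vertex $v$ with $\deg_G(v)\ge n+1$, then $S_{n,n}\subseteq G$} (first assign to $v$ any $n$ of its neighbours other than $u$; then $u$ still has at least $2n-n=n$ neighbours other than $v$ left over, and these serve as the leaves of $u$). Applying this to each color class $G_c$: the set $B_c$ of vertices of $c$-degree at least $2n+1$ is independent in $G_c$, and every $G_c$-neighbour of a vertex of $B_c$ has $c$-degree at most $n$. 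A short count then gives $e(G_c)\le nN$ (equivalently, average degree at most $2n$) for every color $c$.

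The reduction I would aim for is to find a color $c$ and a set $S$ of at most $2n+2$ vertices covering every edge of $G_c$. Given such an $S$, the graph $K_N-S$ has at least $N-(2n+2) = R_{r-1}(S_{n,n})$ vertices and carries a coloring using only the $r-1$ colors other than $c$; hence it contains a monochromatic $S_{n,n}$, a contradiction. One case is immediate: if some $G_c$ has matching number at most $n+1$, then the $\le 2n+2$ endpoints of a maximum matching of $G_c$ form such a cover. So the remaining case — and the step I expect to be the crux — is when \emph{every} color class contains a matching of size at least $n+2$; there one must play the $S_{n,n}$-free structure of all $r$ color classes against each other and against the fact that they partition $E(K_N)$. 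Note that $\sum_c e(G_c)=\binom N2$ together with $e(G_c)\le nN$ has slack $\tfrac12 N(n-2r+3)$, which in the relevant range $n\ge 2r-3$ is positive except at the boundary value $n=2r-3$; I would therefore expect the ``large matching in every color'' case to go through with room to spare once $n>2r-3$, and I would treat $n=2r-3$ separately using the (near-)characterization of the $S_{n,n}$-free graphs meeting $e(G)=nN$ — essentially disjoint unions of copies of $K_{2n+1}$ — together with a divisibility obstruction preventing $K_N$ from being partitioned into the required monochromatic cliques. Pinning down this last case cleanly is the main obstacle.
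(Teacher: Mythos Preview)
Your approach is genuinely different from the paper's, and the structural observations you make along the way are correct: the fact that in an $S_{n,n}$-free graph the ``large degree'' set $B_c$ is independent with all neighbours in the ``small degree'' set, and the consequence $e(G_c)\le nN$, are both valid (and the latter is essentially the content of Observation~\ref{obs:ES} and Lemma~\ref{lem:avdeg}). The inductive scheme $R_r(S_{n,n})\le R_{r-1}(S_{n,n})+2n+2$ would indeed give the theorem.

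The genuine gap is exactly where you flag it: Case~2, where \emph{every} colour class has matching number at least $n+2$. You do not give an argument here, only an expectation that the edge-counting slack $rnN-\binom{N}{2}$ should force something, plus a vague appeal to near-extremal structure at $n=2r-3$. But the slack being positive says only that not every colour is edge-extremal; it does not by itself yield a small vertex cover in some colour, nor a monochromatic $S_{n,n}$. The ``near-characterisation'' you invoke (extremal $S_{n,n}$-free graphs as unions of $K_{2n+1}$'s) would need to be proved, would need a stability version to be useful when the slack is positive, and even then it is unclear how a divisibility obstruction on a single colour interacts with the global partition of $E(K_N)$. As written, this case is a plan, not a proof, and I do not see how to complete it without a substantial new idea.

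For comparison, the paper avoids induction entirely. It first proves (Lemma~\ref{emptyLall}) that at $N=(r-\tfrac12)(2n+2)-1$, the absence of a monochromatic $S_{n,n}$ forces every vertex to have degree in $[n+1,2n]$ in \emph{every} colour; this is done by a clean double-count balancing the contributions of vertices with large degree in some colour against those with small degree in some colour. Then, since $N$ exceeds the Gy\'arf\'as--S\'ark\"ozy bound \eqref{eq:GS} for $R_r(\mathcal S_{2n+2})$, there is a monochromatic (possibly unbalanced) double star $S_{n_1,n_2}$ on $2n+2$ vertices; the minimum-degree condition from Lemma~\ref{emptyLall} lets one rebalance it to $S_{n,n}$. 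The key external input is the Gy\'arf\'as--S\'ark\"ozy theorem on the family of all double stars, which your approach tries to do without---but at the cost of the unresolved Case~2.
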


Note that when $r=2$, this matches the result from \cite{GHK}.  So at the moment, we have no guess as to whether $R_r(S_{n,n})$ is closer to $(r-\frac{1}{2})(2n+2)-1$ or $(r-1)2n+1$ for $r\geq 3$.

Our lower bound on $R_r(S_{n,n})$ comes from the more general lower bound on the $r$-color Ramsey number of balanced bipartite graphs.  One might be able to improve the lower bound on $R_r(S_{n,n})$ by taking advantage of the specific structure of double stars (c.f. Example \ref{ex:small}).

\begin{problem}
Is it true that for all $r\geq 3$ there exists $\ep>0$ such that $R_r(S_{n,n})>(r-1+\ep)2n$?  
\end{problem}

\subsection{Bipartite version}

Given a bipartite graph $G$ and a positive integer $r$, $R^{\bip}_r(G)$ is the smallest integer $N$ such that in every $r$-coloring of the edges of $K_{N,N}$ there is a monochromatic copy of $G$.

Recently, Decamillis and Song \cite{DS} proved the following extremal result for double stars in balanced bipartite graphs.

\begin{theorem}[Decamillis and Song \cite{DS}]
Let $G$ be a balanced bipartite graph on $2N$ vertices and let $n\geq m$ with $N\geq 3n+1$.  If $e(G)>\max\{nN, 2m(N-m)\}$, then $S_{n,m}\subseteq G$.  Furthermore, this result is best possible.
\end{theorem}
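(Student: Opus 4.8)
The plan is to prove the equivalent Tur\'an-type statement --- if a balanced bipartite graph $G$ on parts $A,B$ with $|A|=|B|=N\ge 3n+1$ contains no $S_{n,m}$, then $e(G)\le\max\{nN,\,2m(N-m)\}$ --- and then to exhibit two constructions showing this is sharp. The argument is driven by an elementary local criterion: since $G$ is bipartite, $G$ contains $S_{n,m}$ (where $n\ge m$) if and only if some edge $uv$ satisfies $\max\{d(u),d(v)\}\ge n+1$ and $\min\{d(u),d(v)\}\ge m+1$, because then the $\ge n$ leftover neighbours of the higher-degree endpoint and the $\ge m$ leftover neighbours of the lower-degree endpoint lie on opposite sides of the bipartition, hence are automatically disjoint and serve as the two sets of leaves. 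Call a vertex \emph{big}, \emph{medium}, or \emph{small} according as its degree is $\ge n+1$, lies in $[m+1,n]$, or is $\le m$. Then in an $S_{n,m}$-free $G$, the criterion says exactly that \emph{every neighbour of a big vertex is small}: a big--big or big--medium edge would witness $S_{n,m}$.

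First I would dispose of the degenerate case: if one side, say $A$, contains no big vertex then $e(G)=\sum_{a\in A}d(a)\le nN$, so we may assume each of $A$ and $B$ has a big vertex. A big vertex of $A$ then has at least $n+1$ neighbours, all small and all in $B$; hence $B$ has at least $n+1$ small vertices, and symmetrically so does $A$. Partition $A=H_A\cup M_A\cup L_A$ into its big, medium, and small parts, of sizes $h_A,\mu_A,\ell_A$, and likewise $B=H_B\cup M_B\cup L_B$; thus $h_A,h_B\ge 1$, $\ell_A,\ell_B\ge n+1$, and $h_A+\mu_A+\ell_A=N=h_B+\mu_B+\ell_B$. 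Every edge is either incident to a big vertex --- and then it joins $H_A$ to $L_B$, or $H_B$ to $L_A$ --- or incident to no big vertex, in which case both endpoints are medium (a medium vertex cannot be adjacent to a big one either), so it joins $M_A$ to $M_B$.

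The counting now bounds these pieces. Splitting $E(G)$ into edges meeting $L_A$, edges meeting $L_B$ but not $L_A$, and edges meeting neither (the last of which lie between $M_A$ and $M_B$), and using that small vertices have degree $\le m$ and medium vertices degree $\le n$, yields
\[
e(G)\ \le\ m\ell_A+m\ell_B+n\min\{\mu_A,\mu_B\}.
\]
To reach the value $2m(N-m)$ exactly I would also use the sharper estimate on the big vertices: since $\sum_{a\in H_A}d(a)=e(H_A,L_B)$ and each of the $\ell_B$ small $B$-vertices meets at most $\min\{h_A,m\}$ of the $h_A$ big $A$-vertices,
\[
e(G)\ \le\ m\ell_A+\ell_B\min\{h_A,m\}+n\mu_A,
\]
and likewise $e(G)\le m\ell_B+\ell_A\min\{h_B,m\}+n\mu_B$.

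It then remains to verify that the minimum of these right-hand sides never exceeds $\max\{nN,\,2m(N-m)\}$, using $h_A+\mu_A+\ell_A=N=h_B+\mu_B+\ell_B$, $h_A,h_B\ge 1$, $\ell_A,\ell_B\ge n+1$, $0\le m\le n$, and $N\ge 3n+1$. This finite optimisation is where I expect the real difficulty: a somewhat delicate case analysis --- naturally organised by whether $2m\le n$ or $2m>n$, and by whether $h_A,h_B$ exceed $m$ --- in which the hypothesis $N\ge 3n+1$ is exactly what forces the $2m(N-m)$ term to swallow the leftover error terms in the regime where that term dominates (the crude bound by itself only gives something like $2m(N-1)$, so the refined bounds are genuinely needed). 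The structural part above is clean; squeezing out precisely $\max\{nN,2m(N-m)\}$ is the fiddly part. Finally, for sharpness one exhibits two $S_{n,m}$-free graphs on $2N$ vertices: (i) any $n$-regular balanced bipartite graph, with $nN$ edges and no $S_{n,m}$ because no vertex has degree exceeding $n$; and (ii) the disjoint union of a complete bipartite graph between $m$ vertices of $A$ and $N-m$ vertices of $B$ and one between the remaining $N-m$ vertices of $A$ and $m$ vertices of $B$, with $2m(N-m)$ edges and no $S_{n,m}$ because every edge meets a vertex of degree $m\le n$. Since $N\ge 3n+1\ge n+m+1$, the degree-$(N-m)$ vertices in (ii) really are big, consistent with the structure above; and whichever of $nN$, $2m(N-m)$ is larger, the matching construction shows the bound cannot be improved.
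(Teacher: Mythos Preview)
The paper does not prove this theorem: it is quoted as a result of Decamillis and Song \cite{DS} and is only used through its Corollary~\ref{cor:DS}, so there is no proof in the paper to compare against.

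As for your proposal itself, the structural set-up is correct: in a bipartite graph the criterion ``some edge $uv$ has $\max\{d(u),d(v)\}\ge n+1$ and $\min\{d(u),d(v)\}\ge m+1$'' really is equivalent to containing $S_{n,m}$, your big/medium/small partition is the natural one, and the edge-counting inequalities you derive are valid. The two sharpness constructions are also the right ones. However, what you have written is not yet a proof: you explicitly stop short of the optimisation that shows
\[
\min\bigl\{\,m\ell_A+\ell_B\min\{h_A,m\}+n\mu_A,\ \ m\ell_B+\ell_A\min\{h_B,m\}+n\mu_B\,\bigr\}\ \le\ \max\{nN,\,2m(N-m)\},
\]
and you yourself flag this as ``the fiddly part'' where $N\ge 3n+1$ must be used. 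That step is the actual content of the theorem, and it does require genuine work (for instance, your cruder bound $m\ell_A+m\ell_B+n\min\{\mu_A,\mu_B\}$ can exceed $2m(N-m)$ when $\mu_A=\mu_B=0$, as you note). So the proposal is a sound outline rather than a completed argument; if you want a full proof you would need to carry out that case analysis, or consult the original paper of Decamillis and Song.
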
 

From this, they obtained the following corollary.
\begin{corollary}[Decamillis and Song \cite{DS}]\label{cor:DS}Let $r\geq 2$ be an integer.
\begin{enumerate}
\item If $n\geq 2m$, then $R^{\bip}_r(S_{n,m})\leq rn+1$.
\item If $m\leq n<2m$, then $R^{\bip}_r(S_{n,m})\leq (r+\sqrt{r(r-2)})m+1= (2r-1-\frac{1}{2r}-O(\frac{1}{r^2}))m+1$.
\end{enumerate}
\end{corollary}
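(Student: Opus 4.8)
The plan is to derive Corollary~\ref{cor:DS} from the Decamillis--Song extremal theorem by the standard averaging argument. In any $r$-coloring of $E(K_{N,N})$, some color class $G$ is a balanced bipartite graph on $2N$ vertices with $e(G)\ge\lceil N^2/r\rceil\ge N^2/r$. Hence, once $N$ is chosen so that
\[
N\ge 3n+1\qquad\text{and}\qquad \frac{N^2}{r}>\max\{nN,\,2m(N-m)\},
\]
that theorem produces a monochromatic $S_{n,m}$, giving $R_r^{\bip}(S_{n,m})\le N$. So the whole proof reduces to choosing the smallest admissible $N$ in each of the two regimes and checking these inequalities.

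First I would unpack the edge condition: it is the conjunction of $N^2/r>nN$, which is simply $N>rn$, and $N^2/r>2m(N-m)$, which rearranges to $N^2-2rmN+2rm^2>0$. Setting $c:=r+\sqrt{r(r-2)}$, the polynomial $x^2-2rmx+2rm^2$ has roots $cm$ and $2rm/c$, and since $c\ge 2$ its smaller root satisfies $2rm/c\le rm\le rn$; thus the requirement $N>rn$ already excludes the smaller-root branch, so the second inequality is equivalent to $N>cm$. Consequently the edge condition is exactly $N>\max\{rn,\,cm\}$, and the two parts of the corollary correspond to which of $rn$, $cm$ is larger. In case~(i), where $n\ge 2m$, the crude bound $c\le 2r$ gives $cm\le 2rm\le rn$, so it suffices to take $N=rn+1$, and then $N\ge 3n+1$ holds because $r\ge 3$. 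In case~(ii) one instead takes $N=cm+1$ (rounded up to the next integer if $cm\notin\ZZ$); this clears $N>\max\{rn,\,cm\}$ provided $cm\ge rn$, in which case $N\ge 3n+1$ is checked directly, and here the defining identity $c^2=2r(c-1)$ (equivalently, $c$ is the larger root of $x^2-2rx+2r$) is what makes the comparisons come out cleanly. Translating the bound in~(ii) into $\bigl(2r-1-\tfrac1{2r}-O(\tfrac1{r^2})\bigr)m+1$ is then just the expansion $\sqrt{1-2/r}=1-\tfrac1r-\tfrac1{2r^2}-O(r^{-3})$.

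Two points need extra care. The first is the boundary case $r=2$ of~(i): there $N=2n+1<3n+1$, so the extremal theorem is not directly applicable, and instead one uses the known value $R_2^{\bip}(S_{n,n})=2n+1$ from \cite{HJ} together with the containment $S_{n,m}\subseteq S_{n,n}$ (valid for $m\le n$) to get $R_2^{\bip}(S_{n,m})\le R_2^{\bip}(S_{n,n})=2n+1$. The second, and the step I expect to be the real obstacle, is checking that throughout case~(ii) the value $N=cm+1$ genuinely satisfies \emph{both} $N>rn$ and $N\ge 3n+1$: this is comfortable when $n/m$ is bounded away from $2$, but it tightens as $n\to 2m$, where $rn$ can overtake $cm$, so the case split --- and the precise hypothesis delimiting case~(ii) --- has to be arranged so that the quantity actually claimed as the bound is always the larger of $rn+1$ and $cm+1$. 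Once the hypotheses of the extremal theorem are in place, both inequalities of the corollary follow immediately.
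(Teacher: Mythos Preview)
Your derivation of both parts from the Decamillis--Song extremal theorem is the natural route, and your analysis of the quadratic $N^2-2rmN+2rm^2$ (with larger root $cm$, $c=r+\sqrt{r(r-2)}$) is correct. The paper, however, does not reproduce the argument from \cite{DS}; it only offers a short alternative proof of part~(i), and that proof is different from yours. Rather than invoking the extremal theorem --- which, as you noticed, carries the side hypothesis $N\ge 3n+1$ and hence forces a separate treatment of $r=2$ --- the paper applies Lemma~\ref{lem:avdeg} directly: the majority color class has average degree at least $N/r>n$, so it contains a subgraph $G'$ of minimum degree greater than $n/2\ge m$, and any vertex of degree at least $n+1$ in $G'$ together with any neighbor gives $S_{n,m}$. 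This is more elementary, avoids the extremal theorem entirely, and works uniformly for all $r\ge 2$, so your patch via \cite{HJ} becomes unnecessary.

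For part~(ii) the paper gives no argument, and your closing concern is exactly right. In the stated range $m\le n<2m$ one can have $rn>cm$ (precisely when $n/m>c/r=1+\sqrt{1-2/r}$, a threshold strictly below $2$ for every $r\ge 2$), and then $N=\lfloor cm\rfloor+1$ fails the requirement $N>rn$, so the extremal-theorem derivation does not go through for that $N$. Worse, the bound $cm+1$ cannot hold throughout case~(ii) as written: the blow-up of a proper $r$-edge-coloring of $K_{r,r}$ already gives $R^{\bip}_r(S_{n,m})\ge rn+1$, which exceeds $cm+1$ in exactly that sub-range. So your instinct that the true conclusion is $R^{\bip}_r(S_{n,m})\le\max\{rn+1,\,cm+1\}$ --- equivalently, that the split between (i) and (ii) should occur at $n/m=c/r$ rather than at $n/m=2$ --- is the correct diagnosis of a genuine gap in the statement as recorded here.
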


We note that Corollary \ref{cor:DS}(i) follows immediately from Lemma \ref{lem:avdeg} since the majority color class has average degree greater than $n$, so there is a subgraph in which there exists a vertex of degree at least $n+1$ whose neighbors on the other side all have degree greater than $n/2\geq m$ which implies they all have degree at least $m+1$.

Our second main result is an improvement on the upper bound of $R^{\bip}_r(S_{n,n})$.

\begin{theorem}\label{thm:bipmain}~
\begin{enumerate}
\item For all $r\geq 2, R^{\bip}_r(S_{n,n})\leq \left(\frac{3r-5 + \sqrt{r^2-2r+9}}{2}\right)n+1= (2r-3+\frac{2}{r}+O(\frac{1}{r^2}))n$

\item $R^{\bip}_3(S_{n,n})< 3.6678 n$
\end{enumerate}
\end{theorem}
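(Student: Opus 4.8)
The plan is to prove a statement of the following shape: in any $r$-coloring of $K_{N,N}$ with $N$ slightly above $(2r-3+\tfrac2r)n$, the majority color class (or some color class) must contain a copy of $S_{n,n}$. The Decamillis--Song extremal theorem tells us that a balanced bipartite graph on $2N$ vertices with more than $nN$ edges contains $S_{n,n}$, so if the majority color has more than $nN$ edges we are done; the total number of edges is $N^2$, so this fails only when $N^2/r \le nN$, i.e. $N \le rn$, which is far weaker than what we want. Hence the real work is to squeeze more out of the structure: we should look at the densest color class and argue that if it does \emph{not} contain $S_{n,n}$, then its edges are constrained to lie in a union of a complete bipartite graph and a low-degree remainder, so it has \emph{few} edges, not just ``at most $nN$''. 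The key tool here is Lemma \ref{lem:avdeg}: if a color class with $d N$ edges has no vertex of degree $>n$ whose neighbor (on the other side) has degree $>n$, then after repeatedly deleting vertices of degree $\le n$ we reach an empty graph, forcing every subgraph to have average degree $\le 2n$ in the usual sense — but this only recovers the $rn$ bound again. So instead I would use the sharper structural statement implicit in Decamillis--Song: if $S_{n,n}\not\subseteq G$ with $G\subseteq K_{N,N}$, then $e(G)\le \max\{nN, \, 2m(N-m)\}$ only applies to $S_{n,m}$ with $n>m$, so for the balanced case we must iterate. The cleanest route is: a color class avoiding $S_{n,n}$ has all but at most $2n$ vertices (split across the two sides) of degree $\le n$, OR it has a ``core'' of high-degree vertices on one side that see only low-degree vertices on the other; in the latter case its edge count is bounded by roughly $n\cdot N + n\cdot(\text{core size})$, and the core has size $< N$, giving $e(G) < 2nN$ — still not enough by itself.

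The actual improvement must come from averaging over \emph{all} $r$ colors simultaneously, not just the majority one. Here is the mechanism I expect to work: suppose \emph{no} color contains $S_{n,n}$. For each color $i$, let $G_i$ be its graph; by the Decamillis--Song structure, $V(K_{N,N})$ splits (per color) into a small set $B_i$ of ``bad'' (high-degree) vertices and the rest, where outside $B_i$ the color-$i$ degree is $\le n$. The total degree summed over all colors at any fixed vertex $v$ is exactly $N$ (it has $N$ edges to the other side, each one color). If $v\notin B_i$ for all $i$, then $N=\sum_i d_i(v)\le rn$, so $v$ lies in some $B_i$ whenever $N>rn$. Thus $\bigcup_i B_i$ covers \emph{all} $2N$ vertices, so $\sum_i |B_i|\ge 2N$, meaning some color $i^\*$ has $|B_{i^\*}|\ge 2N/r$. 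Now I would feed this back: the color $i^\*$ has $\ge 2N/r$ high-degree vertices, but a bipartite graph with no $S_{n,n}$ and that many high-degree vertices on (say) its $X$-side forces \emph{those} high-degree vertices to have \emph{all} their neighbors of $Y$-degree $\le n$; combined with the edge-count $e(G_{i^\*})\le nN$, and the fact that the $\ge 2N/r$ high-degree $X$-vertices alone contribute $> (2N/r)(n+1)$ edges, we need $n N \ge (2N/r)\cdot n$, which is automatic — so the bound has to be pushed one notch further by also counting the edges incident to the high-degree vertices on \emph{both} sides and using that a vertex cannot be simultaneously high-degree in too many colors. Carefully optimizing this double-counting — balancing ``how much of $K_{N,N}$ each color can grab'' against ``each color that grabs a lot must have a large bad set, which eats into the other colors'' — is what produces the quadratic $\tfrac{3r-5+\sqrt{r^2-2r+9}}{2}$: one sets the bad-set fraction to a variable $t$, writes the edge budget as (roughly) $nN$ per color but with a correction $+\,tn N$ for the extra edges forced at bad vertices, sums to $rN\cdot(\text{stuff}) \ge N^2$, and solves the resulting quadratic in $N/n$.

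For part (ii), the generic bound gives $R^{\bip}_3(S_{n,n}) \le \tfrac{4+\sqrt{12}}{2}n+1 = (2+\sqrt 3)n+1 \approx 3.7321n$, so to reach $3.6678n$ one needs an extra case-analysis specific to $r=3$. I would argue as follows: with only three colors, the ``bad set'' covering argument is tight only when each vertex is bad in exactly one color and the three bad sets partition $V(K_{N,N})$ into three parts $B_1,B_2,B_3$ of total size $2N$; inside the bipartite graph between $B_i$ (on one side) and its color-$i$ neighbors, one gets a near-complete-bipartite color-$i$ subgraph, and then a short parity/counting argument on how edges between $B_i$ and $B_j$ must be colored (they can be $i$, $j$, or $3-i-j$, but being colored $i$ forces a low-degree endpoint, etc.) yields a strictly better inequality. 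Concretely I expect to set up $|B_i| = \beta_i N$ with $\sum\beta_i \ge 2$, bound $e(G_i)$ from below by a function of $\beta_i$ and the $\beta_j$ of the other colors, sum, and minimize — the constant $3.6678$ being the numerical solution of that small optimization. \textbf{The main obstacle} I anticipate is making the structural dichotomy from Decamillis--Song quantitatively precise enough: their theorem is stated as an edge-count bound $e(G)\le\max\{nN,2m(N-m)\}$ for $S_{n,m}$ rather than as an explicit ``few bad vertices'' structure, so I will need to re-derive (via Lemma \ref{lem:avdeg} applied to an auxiliary graph, or by a direct deletion argument) the statement that an $S_{n,n}$-free bipartite graph has at most $\approx$ (one side, minus a controlled deficiency) vertices of degree exceeding $n$, and control exactly how large the ``bad'' side can be — that bookkeeping, rather than any single clever idea, is where the proof lives.
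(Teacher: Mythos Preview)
Your framework is right at the start: color each vertex with every color in which it has degree $>n$, note that when $N>rn$ every vertex gets at least one color, and observe that if an edge of color $i$ joins two vertices both colored $i$, you immediately get $S_{n,n}$.  But from there you head in the wrong direction.  Trying to extract a single color with $|B_{i^*}|\ge 2N/r$ and then appeal to a Decamillis--Song-type structural statement does not close the argument (you yourself see the inequality becomes ``automatic''), and in fact the paper never uses the Decamillis--Song extremal theorem at all in this proof.  The ``main obstacle'' you flag --- re-deriving a quantitative few-bad-vertices structure --- is not the right obstacle.

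What actually drives the bound is a \emph{global} edge-count, and it requires two ingredients you are missing.  First, separate the vertices by \emph{how many} colors they receive: let $z_i$ be the number of vertices receiving exactly $i$ colors, and for each color $j$ let $x_j,y_j$ count the vertices in $X,Y$ receiving \emph{only} color $j$.  Second, introduce the fraction $\sigma^2$ of edges that are \emph{not} ``important'' (an edge of color $i$ is important if at least one endpoint is colored $i$).  The forbidden-$S_{n,n}$ observation says an important edge of color $i$ has exactly one endpoint colored $i$, so the non-important edges satisfy $\sigma^2 N^2\ge \sum_j x_j y_j$, while from the important-edge side one gets $\sum_i z_i\bigl(N-(r-i)n\bigr)\le (1-\sigma^2)N^2$, hence $\sum_{i\ge 2} z_i\le (2r-2-\alpha(1+\sigma^2))N$ with $\alpha=N/n$.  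A separate count (edges of color $j$ from $X_j$ must land in $Y\setminus\mathcal Y_j$) gives $x_j,y_j\le N/(\alpha-(r-1))$.  Combining these with $2N=z_1+\sum_{i\ge 2}z_i$ and a short optimization in a free parameter (balancing the $x_jy_j$ bound against the $x_j,y_j\le N/(\alpha-r+1)$ bound) yields exactly the quadratic $\alpha^2-(3r-5)\alpha+(2r^2-7r+4)\le 0$, whose larger root is $\tfrac{3r-5+\sqrt{r^2-2r+9}}{2}$.

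For part (ii), your instinct that an extra $r=3$ case-analysis is needed is correct, but again it runs through the $\sigma$ parameter and the singly-colored sets $x_j,y_j$, not through a partition of bad sets.  One splits on $t=\#\{j:\max(x_j,y_j)\ge \sigma N\}$; the cases $t=0,1,2$ are easy, and $t=3$ uses that after relabeling, $x_2+x_3\le \sigma N$, so $|X|\le x_1+(x_2+x_3)$ is small.  The $3.6678$ is then the largest real root of $4\alpha^3-20\alpha^2+19\alpha+2=0$, not of a hand-tuned optimization over $\beta_i$'s.
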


Note that when $r=2$, we have $\left(\frac{3r-5 + \sqrt{r^2-2r+9}}{2}\right)n+1=2n+1$, so this recovers the known bound \cite{HJ} in that case.

Regarding lower bounds on $R^{\bip}_r(S_{n,n})$, first note that  $R^{\bip}_r(S_{n,n})\geq rn+1$ by taking a proper $r$-edge-coloring of $K_{r,r}$ and blowing up each vertex into a set of $n$ vertices.  In fact, this same example shows that $R^{\bip}_r(S_{n,m})\geq rn+1$ for $n\geq m$ (and thus Corollary \ref{cor:DS}(i) is tight).

A result of DeBiasio, Gy\'arf\'as, Krueger, Ruszink\'o, and S\'ark\"ozy \cite{DGKRS} implies a better lower bound on $R^{\bip}_r(S_{n,n})$ for all $r\geq 4$.

\begin{example}\label{ex:DGKRS}
For every balanced bipartite graph $G$ on $2n+2$ vertices,  
\[ R^{\bip}_r(G) \geq 
    \begin{cases}
        rn+1, & 1\leq r\leq 3\\
       5n+1, & r=4\\
       (2r-4)n+1, & r\geq 5 
        \end{cases}
  \]
\end{example}

We provide an alternate lower bound, specific to balanced double stars, which beats the lower bound from Example \ref{ex:DGKRS} when $r=3$ and $r=5$ (and matches the lower bound from Example \ref{ex:DGKRS} when $r=4$ and $r=6$).

\begin{example}\label{ex:small}For all $r\geq 2$,
\[
R^{\bip}_r(S_{n,n})\geq
    \begin{cases}
       (\frac{3r}{2}-1)n+1, & r \text{ is even}\\
       (r-1+\frac{\sqrt{r^2-1}}{2})n - \frac{r+1}{2}& r \text{ is odd} 
        \end{cases} 
\]
\end{example}

Thus by combining Theorem \ref{thm:bipmain} together with Example \ref{ex:DGKRS} and Example \ref{ex:small}, we have that for all $r\geq 3$,
\[
    \begin{rcases}
    (2+\sqrt{2})n-2 & r=3\\
       5n+1 & r=4\\
       (4+\sqrt{6})n-3 & r= 5\\
       (2r-4)n+1 & r\geq 6
        \end{rcases} 
 \leq 
 R^{\bip}_r(G) \leq
 \begin{cases}
    3.6678n & r=3\\
       \left(\frac{3r-5 + \sqrt{r^2-2r+9}}{2}\right)n+1 & r\geq 4
        \end{cases}.
  \]

\subsection{Comparison between balanced double stars and paths}

It is interesting to compare the Ramsey numbers of the double star $S_{n,n}$ to another well-studied balanced tree, the path on $2n+2$ vertices, $P_{2n+2}$. In the $r=2$ case, the Ramsey numbers of both graphs are known to be the same \cite{GG}, \cite{GHK}. Likewise, the lower bounds in the case of $r=3$ are the same.  Furthermore, for all $r\geq 4$, the best known bounds for $R_r(P_{2n+2})$ \cite{KS}  and $R_r(S_{n,n})$ (Theorem \ref{thm:bipmain}) are essentially the same.

For bipartite Ramsey numbers, the situation for small $r$ is quite different. For $r=2$, the bipartite Ramsey numbers for both graphs are the same \cite{FSc,GL}, \cite{HJ}. However, when $r=3$, it is known that $3n+1\leq R_3^{\bip}(P_{2n+2})=(3+o(1))n$ \cite{BLS1}, whereas we prove  that $(2+\sqrt{2})n+1\leq R_3^{\bip}(S_{n,n})\leq 3.6678n+1$.

One takeaway from this comparison is that, given the current state of knowledge, it is possible that for all $r\geq 2$, $R_r(S_{n,n})=R_r(P_{2n+2})$; however, it is impossible (due to the case $r=3$) that for all $r\geq 2$, $R_r^{\bip}(S_{n,n})=(1+o(1))R_r^{\bip}(P_{2n+2})$.

Another particular case of interest regarding the bipartite Ramsey numbers of $P_{2n+2}$ and $S_{n,n}$ is when $r=5$.  Note that Example \ref{ex:DGKRS} gives a lower bound of $6n+1$ for both graphs. However, Buci\'c, Letzter, and Sudakov improved this lower bound to $R_5^{\bip}(P_{2n+2})\geq 6.5n+1$.  Their example relies on the fact that $P_{2n+2}$ has vertex cover number equal to $n+1$ (whereas $S_{n,n}$ has vertex cover number 2).  In Example \ref{ex:small}, we improve the lower bound to $R_5^{\bip}(S_{n,n})\geq 6.4494n$.  Our example relies on the fact that $S_{n,n}$ has adjacent vertices each of degree $n+1$ (whereas $P_{2n+2}$ has maximum degree 2). 

\begin{table}[h]
\caption{A summary of results regarding the Ramsey numbers of paths and balanced double stars.  The new results from this paper correspond to the shaded entries.}
\begin{tabular}{|c|c|c|c|c|}
\hline
  & \multicolumn{2}{|c|}{$R_r(P_{2n+2})$} & \multicolumn{2}{|c|}{$R_r(S_{n,n})$} \\ \hline
$r$ & lower bound  & upper bound  & lower bound  & upper bound  \\ \hline
2 & $3n+2$ \cite{GG}& $3n+2$ \cite{GG}& $3n+2$ \cite{GHK}& $3n+2$ \cite{GHK}\\ \hline
3 & $4n+2$  \cite{GRSS}&  $4n+2$ \cite{GRSS}&  $4n+2$ \cite{GRSS} & \cellcolor{violet!20} $5n+1$   \\ \hline
$\geq 4$ &  $(r-1)2n+1$  \cite{SYXL} & $(r-\frac{1}{2}+o(1))2n$  \cite{KS}& $(r-1)2n+1$ \cite{SYXL}& $ \cellcolor{violet!20} (r-\frac{1}{2})(2n+2)-1$ \\ \hline
\end{tabular}
\bigskip

\begin{tabular}{|c|c|c|c|c|}
\hline
  & \multicolumn{2}{|c|}{$R_r^{\bip}(P_{2n+2})$} & \multicolumn{2}{|c|}{$R_r^{\bip}(S_{n,n})$} \\ \hline
$r$ & lower bound  & upper bound  & lower bound  & upper bound  \\ \hline
2 & $2n+1$ \cite{FSc,GL}&  $2n+1$ \cite{FSc,GL}& $2n+1$ \cite{HJ}& $2n+1$ \cite{HJ}\\ \hline
3 & $3n+1$ \cite{BLS1}& $(3+o(1))n \cite{BLS1}$ & \cellcolor{violet!20} $3.4142n$  & \cellcolor{violet!20}$3.6678n$   \\ \hline
4 & $5n+1$ \cite{DGKRS}& $(5+o(1))n$ \cite{BLS2}& $5n+1$ \cite{DGKRS}& \cellcolor{violet!20} $5.5616n$\\ \hline
5 & $6.5n+1$ \cite{BLS2}& $(7+o(1))n$ \cite{BLS2}& \cellcolor{violet!20}  $6.4494n$ & \cellcolor{violet!20} $7.4495n$ \\ \hline
$\geq 6$ & $(2r-4)n+1 $ \cite{DGKRS}& $(2r-3.5+O(\frac{1}{r}))n$ \cite{BLS2}&  $(2r-4)n+1 $ \cite{DGKRS}& \cellcolor{violet!20} $(2r-3+O(\frac{1}{r}))n$ \\ \hline
\end{tabular}
\end{table}

\section{Multicolor Ramsey numbers of balanced double stars}
In this section we prove $R_r(S_{n,n})\leq (r-\frac{1}{2})(2n+2)-1$.  We begin our proof by showing that if we have an $r$-coloring of $K_{(r-\frac{1}{2})2n+2}$ with no monochromatic $S_{n,n}$, then every vertex has degree at least $n+1$ in every color.  However, we do this indirectly by showing that if there is no monochromatic $S_{n,n}$, then every vertex has degree at most $2n$ in every color which in turn implies that every vertex has degree at least $n+1$ in every color\footnote{Note that if $N\leq (r-\frac{1}{2})2n+1$, we don't necessarily have this property anymore (since $2n(r-1)+n=(r-\frac{1}{2})2n$) and thus proving that all vertices have degree at most $2n$ in all colors doesn't automatically imply that all vertices have degree at least $n+1$ in every color.}.

Given a graph $G$ on $N$ vertices, let $L(G)=\{v\in V(G): d(v)\geq 2n+1\}$, $M(G)=\{v\in V(G): n+1\leq d(v)\leq 2n\}$, and $S(G)=\{v\in V(G): d(v)\leq n\}$ (we think of $L(G)$ as the set of vertices of large degree, $M(G)$ as the set of vertices of medium degree, and $S(G)$ as the set of vertices of small degree).  

\begin{observation}\label{obs:SL}
Let $G$ be a graph on $N\geq 2n+2$ vertices.  If $L(G)\neq \emptyset$, then either $G$ contains every double star on $2n+2$ vertices, or $\sum_{v\in L(G)}d(v)=e(L(G),S(G))$ (in particular, $S(G)\neq \emptyset$).  
\end{observation}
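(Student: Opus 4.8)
The plan is to establish the dichotomy in a contrapositive-friendly form: I will show directly that \emph{if} some vertex of $L(G)$ has a neighbour lying outside $S(G)$, \emph{then} $G$ contains every double star on $2n+2$ vertices. Granting this, the hypothesis $L(G)\neq\emptyset$ does the rest: if $G$ misses some such double star, then every neighbour of every vertex of $L(G)$ must lie in $S(G)$, which is precisely the assertion that there are no edges inside $L(G)$ and no $L(G)$--$M(G)$ edges, hence $\sum_{v\in L(G)}d(v)=2e(L(G))+e(L(G),M(G))+e(L(G),S(G))=e(L(G),S(G))$; and since any $u\in L(G)$ has $d(u)\geq 2n+1>0$ it has a neighbour, necessarily in $S(G)$, so $S(G)\neq\emptyset$.

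The core step is the embedding. Suppose $u\in L(G)$ has a neighbour $v$ with $d(v)\geq n+1$, and fix $k,l\geq 1$ with $k+l=2n$; say $k\leq n\leq l$. I build $S_{k,l}$ with $u$ as the center of degree $l+1$ and $v$ as the center of degree $k+1$, and the one genuine idea is the \emph{order} of selection. Choosing $u$'s $l$ leaves first could exhaust all of $v$'s neighbours (as $d(v)-1-l$ may be nonpositive), so instead I first pick a set $B\subseteq N(v)\setminus\{u\}$ with $|B|=k$ (possible since $d(v)\geq n+1\geq k+1$), and only then a set $A\subseteq N(u)\setminus(\{v\}\cup B)$ with $|A|=l$. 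The second choice succeeds because $u$ has very large degree: $|N(u)\setminus(\{v\}\cup B)|\geq d(u)-1-k\geq (2n+1)-1-k=l$. Now $A\cap B=\emptyset$, and $u,v\notin A\cup B$ with $u\neq v$, so $\{u,v\}\cup A\cup B$ consists of $2+l+k=2n+2$ distinct vertices; together with all edges $uv$, $ua\ (a\in A)$ and $vb\ (b\in B)$ they span a copy of $S_{k,l}$. As $(k,l)$ was arbitrary, $G$ contains every double star on $2n+2$ vertices.

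Assembling the pieces: if $G$ does not contain every double star on $2n+2$ vertices, the embedding step forbids any vertex of $L(G)$ from having a neighbour of degree $\geq n+1$; since the vertices of degree $\geq n+1$ are exactly $L(G)\cup M(G)$, every neighbour of every vertex of $L(G)$ lies in $S(G)$, and the degree count above yields $\sum_{v\in L(G)}d(v)=e(L(G),S(G))$, with $S(G)\neq\emptyset$ as explained. The only obstacle worth flagging is the one addressed in the middle paragraph --- fitting two stars of total leaf count $2n$ into exactly $2n+2$ vertices with disjoint leaf sets --- and it is overcome purely by exploiting the asymmetry between $d(u)\geq 2n+1$ and the modest size $k\leq n$ of the smaller leaf set; the remainder is bookkeeping.
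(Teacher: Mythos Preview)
Your proof is correct and follows essentially the same approach as the paper's: both argue that an edge from $L(G)$ into $L(G)\cup M(G)$ already yields every double star on $2n+2$ vertices, and otherwise all neighbours of $L(G)$ lie in $S(G)$, giving the degree identity and $S(G)\neq\emptyset$. The paper states the embedding step in a single sentence, whereas you spell out the greedy selection (smaller leaf set first) explicitly; the only cosmetic point is that your restriction $k,l\geq 1$ could be relaxed to $k\geq 0$ with no change to the argument, covering the degenerate double star $S_{0,2n}$ as well.
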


\begin{proof}
If there is an edge with one endpoint in $L(G)$ and the other in $M(G)\cup L(G)$, then we would have every double star $S_{n_1,n_2}$ on $2n+2$ vertices.  So if $L(G)\neq \emptyset$, then we must have $\sum_{v\in L(G)}d(v)=e(L(G),S(G))$; in particular, $S(G)\neq \emptyset$.
\end{proof}

\begin{lemma}\label{emptyLall}
Let $r\geq 2$ and $N\geq (r-\frac{1}{2})2n+2$. In every $r$-coloring of $K_N$ either
\begin{enumerate}
\item there is a monochromatic copy of every double star on $2n+2$ vertices, or
\item every vertex has degree at least $n+1$ and at most $2n$ in every color.
\end{enumerate}
\end{lemma}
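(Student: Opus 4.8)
The plan is to prove the contrapositive-flavored dichotomy by showing that if there is no monochromatic copy of every double star on $2n+2$ vertices, then in each color $i$ every vertex has degree at most $2n$, and then to deduce the lower bound $d_i(v)\ge n+1$ by a counting argument that uses $N\ge (r-\tfrac12)2n+2$. Fix an $r$-coloring of $K_N$ and suppose alternative (i) fails, so for every color $i$ the graph $G_i$ formed by the color-$i$ edges does \emph{not} contain every double star on $2n+2$ vertices. First I would observe that it suffices to rule out $L(G_i)\ne\emptyset$ for each $i$: if $L(G_i)=\emptyset$ for all $i$ then every vertex has degree at most $2n$ in every color, and since the total degree of each vertex is $N-1\ge (r-\tfrac12)2n+1 > (r-1)2n$, a vertex with degree $\le n$ in some color $i$ would force its degree in the remaining $r-1$ colors to sum to more than $(r-1)2n$, hence to exceed $2n$ in at least one of them — contradiction. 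So the whole lemma reduces to: \emph{if (i) fails, then $L(G_i)=\emptyset$ for every $i$.}

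Next I would bring in Observation 2.8. Suppose toward a contradiction that $L(G_i)\ne\emptyset$ for some color $i$; write $L=L(G_i)$, $M=M(G_i)$, $S=S(G_i)$. Since (i) fails, $G_i$ does not contain every double star, so by Observation 2.8 we have $\sum_{v\in L}d_i(v)=e_{G_i}(L,S)$, and in particular there are no color-$i$ edges within $L$, none between $L$ and $M$, and $S\ne\emptyset$. Now I would extract quantitative consequences: each $v\in L$ has $d_i(v)\ge 2n+1$, all of whose neighbors lie in $S$, so $|S|\ge 2n+1$; and each $v\in S$ sends at most $|S|-1$ color-$i$ edges within $S$ and the rest to $L\cup M$, while $d_i(v)\le n$. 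Counting the bipartite edge set $e_{G_i}(L,S)$ from the $S$ side gives $e_{G_i}(L,S)\le \sum_{v\in S} d_i(v) \le n|S|$, whereas from the $L$ side $e_{G_i}(L,S)=\sum_{v\in L}d_i(v)\ge (2n+1)|L|$. This already constrains the sizes, but the real leverage comes from looking at the \emph{other} colors on the vertices of $S$, exactly as in the footnote's arithmetic $(r-1)2n+n=(r-\tfrac12)2n$.

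The key step — and the one I expect to be the main obstacle — is the global double count that forces a contradiction with $N\ge (r-\tfrac12)2n+2$. For a vertex $v\in S=S(G_i)$ we have $d_i(v)\le n$, so $v$ has $\ge N-1-n \ge (r-\tfrac12)2n+1-n = (r-1)2n$ edges in the other $r-1$ colors; if any of those colors $j$ has $d_j(v)\ge 2n+1$ then $v\in L(G_j)$, and one would want to iterate the Observation 2.8 structure in color $j$. The delicate part is that "$v$ has small color-$i$ degree" must be parlayed, across all the colors and all such low-degree vertices, into either producing an $L$–$M$ or $L$–$L$ edge in some color (which yields every double star, contradicting the failure of (i)) or an impossible edge-count in $K_N$. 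I would organize this by summing, over all colors $j$ and all $v\in L(G_j)$, the identity $d_j(v)=e_{G_j}(L(G_j),S(G_j))$; each such term is $\ge 2n+1$ and is "paid for" by color-$j$ edges landing in $S(G_j)$, and I would argue that the sets $S(G_j)$ cannot absorb all of this because every vertex $u$ can lie in $S(G_j)$ for at most one color $j$ (since $\sum_j d_j(u)=N-1>(r-1)2n$ forces $d_j(u)\ge 2n+1$, hence $u\notin S(G_j)$, for at least one $j$, and in fact $u\in S(G_j)$ for a color $j$ leaves its degree in the other colors summing to $>(r-1)2n$, so it is "large" somewhere but cannot be "small" in two colors simultaneously). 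Pinning down this last combinatorial bookkeeping cleanly — getting the inequality to close with the exact threshold $(r-\tfrac12)2n+2$ rather than something weaker — is where the careful work lies; everything before it is routine structural unpacking via Observations 2.7 and 2.8.
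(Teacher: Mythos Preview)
Your overall strategy---reduce to showing $L(G_i)=\emptyset$ for every color, then get the lower bound $d_i(v)\ge n+1$ from the pigeonhole---matches the paper's, and your opening reduction is correct. The gap is in the double-count you sketch at the end. Your parenthetical claim that ``every vertex $u$ can lie in $S(G_j)$ for at most one color $j$'' is false: with $r=3$ and $N=5n+2$, a vertex can have color-degrees $(n,n,3n+1)$, so it lies in $S_1\cap S_2\cap L_3$. The argument you give (small in one color forces large in another) shows $S\subseteq L$, which the paper uses too, but it does \emph{not} bound the number of colors in which a vertex is small.

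The paper closes the count by explicitly allowing a vertex to be small in several colors. For each $v$ it sets $\lambda_v=|\{i:v\in L_i\}|$ and $\sigma_v=|\{i:v\in S_i\}|$, and replaces your crude lower bound $\sum_{i\in\lambda_v}d_i(v)\ge(2n+1)\lambda_v$ by the sharper
\[
\sum_{i\in\lambda_v}d_i(v)\ \ge\ (N-1)-n\sigma_v-2n(r-\sigma_v-\lambda_v)\ \ge\ (2\lambda_v+\sigma_v-1)n+1,
\]
which for $v\in L$ (so $\lambda_v\ge1$) strictly exceeds $\sigma_v n$. Summing over $v\in L$ and using $S\subseteq L$ turns the right side into $\sum_i|S_i|n$, which then contradicts the upper bound $\sum_i\sum_{v\in L_i}d_i(v)\le\sum_i|S_i|n$ coming from Observation~\ref{obs:SL}. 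This per-vertex inequality involving both $\lambda_v$ and $\sigma_v$ is the missing ingredient in your sketch; without it the bookkeeping does not close at the threshold $(r-\tfrac12)2n+2$.
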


\begin{proof}
Suppose we have an $r$-colored $K_N$ and for all $i\in[r]$, let $G_i$ be the graph on $V(K_N)$ consisting of edges of color $i$.  For all $i\in [r]$, set $L_i=L(G_i)$, $M_i=M(G_i)$, and $S_i=S(G_i)$.  Set $S=\cup_{i\in [r]}S_i$ and $L=\cup_{i\in [r]}L_i$.  Note that if $v$ has degree at most $n$ in some color $i\in [r]$, then $v$ has degree at least $\frac{N-1-n}{r-1}>2n$ in some other color $j\in [r]\setminus \{i\}$ and thus
\begin{equation}\label{eq:SL}
S\subseteq L.
\end{equation}

Now suppose that (i) fails.  Given the definitions of $S$ and $L$, we have that (ii) is equivalent to saying $S=\emptyset=L$, which by \eqref{eq:SL} is equivalent to saying $L=\emptyset$.  So suppose for contradiction that $L\neq \emptyset$ and consequently by Observation \ref{obs:SL}, $S\neq \emptyset$.  By Observation \ref{obs:SL} and the definition of $S_i$ we have 
\begin{equation}\label{eq:LiSi}
\sum_{v\in L_i}d_i(v)=e_i(S_i,L_i)\leq |S_i|n.
\end{equation}

For all $v\in V(G)$, let $\lambda_v=\{i\in [r]:v\in L_i\}$ and $\sigma_v=\{i\in [r]:v\in S_i\}$.  Note that for all $v\in V(G)$, we have $\sum_{i\in \lambda_v}d_i(v)\geq (2n+1)\lambda_v$, but the following alternate bound will prove more useful 
\begin{equation}\label{eq:vlambda2}
\sum_{i\in \lambda_v}d_i(v)\geq (r-\frac{1}{2})2n+1-n\sigma_v-2n(r-\sigma_v-\lambda_v)=(2\lambda_v+\sigma_v-1)n+1.
\end{equation}
For all $v\in L$, we have $\lambda_v\geq 1$ and thus $(2\lambda_v+\sigma_v-1)n+1>\sigma_vn$ which implies
\begin{equation}\label{eq:double}
\sum_{v\in L}(2\lambda_v+\sigma_v-1)n+1>\sum_{v\in L}\sigma_vn \stackrel{\eqref{eq:SL}}{=} \sum_{i\in [r]}|S_i|n.
\end{equation}

Putting everything together we have
\begin{align*}
\sum_{i\in [r]}|S_i|n\stackrel{\eqref{eq:LiSi}}{\geq} \sum_{i\in [r]}\sum_{v\in L_i}d_i(v)=\sum_{v\in L}\sum_{i\in \lambda_v}d_i(v)&\stackrel{\eqref{eq:vlambda2}}{\geq} \sum_{v\in L}(2\lambda_v+\sigma_v-1)n+1\stackrel{\eqref{eq:double}}{>}\sum_{i\in [r]}|S_i|n,
\end{align*}
a contradiction.
\end{proof}

Finally we prove the main result of this section. 

\begin{proof}[Proof of Theorem \ref{thm:main1}]
Let $N= (r-\frac{1}{2})(2n+2)-1$ and consider an arbitrary $r$-coloring of $K_N$.  If we don't have a monochromatic copy of every double star on $2n+2$ vertices, then since $N\geq (r-\frac{1}{2})2n+2$,  Lemma \ref{emptyLall} implies that every vertex has degree between $n+1$ and $2n$ in every color.  Since $N=(r-\frac{1}{2})(2n+2)-1\geq \left(r-1+\frac{1}{r+1}\right)(2n+2)-\frac{r-1}{r+1}$ (where the inequality holds for all $r\geq 2$ and $n\geq 1$), we have by \eqref{eq:GS} a monochromatic double star $S:=S_{n_1,n_2}$ of color $i$ and order at least $2n+2$ with $n_1\geq n_2$.  Suppose $xy$ is the central edge of $S$, and without loss of generality suppose $x$ is adjacent to $n_1$ many leaves and $y$ is adjacent to $n_2$ many leaves.  Since $y$ has degree at least $n+1$ in color $i$, and since $n_1-(n-n_2)\geq n$, there exists a copy of $S_{n,n}$ of color $i$ having $xy$ as the central edge.  
\end{proof}

Because of the slack between $(r-\frac{1}{2})(2n+2)-1$ and $\left(r-1+\frac{1}{r+1}\right)(2n+2)-\frac{r-1}{r+1}$ in the above proof, any improvement to the multiplicative term $(r-\frac{1}{2})$ in Lemma \ref{emptyLall}, will translate to an improvement in Theorem \ref{thm:main1}.  Since a slightly weaker statement will suffice, we state the problem more formally.

\begin{problem}
Is is true that for all $r\geq 3$, there exists $\ep>0$ such that in every $r$-coloring of the edges of $K_{(r-\frac{1}{2}-\ep)(2n+2)}$ either there is a monochromatic copy of $S_{n,n}$, or every vertex has degree at least $n+1$ in every color?
\end{problem}

\section{Bipartite case}

We begin with Example \ref{ex:small}. The following lemma describes a coloring which will be useful in our construction.

\begin{lemma}\label{lem:2col}
    Let $t\ge s > n\ge 2$ be integers and $G$ be a complete bipartite graph with parts $X$ and $Y$ of sizes $t$ and $s$ respectively. If $s - \floor{\frac{sn}{t}}\le n$, then there is a coloring of the edges of $G$ with colors $\set{1,2}$ such that 
    \begin{enumerate}
        \item $d_1(v) \le n$ for all $v\in X$, and 
        \item $d_2(v) \le n$ for all $v \in Y $.
    \end{enumerate}
\end{lemma}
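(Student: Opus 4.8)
The plan is to build the color-$1$ subgraph $H\subseteq G$ explicitly and color all remaining edges with color $2$, so that (i) becomes a cap on $X$-side degrees in $H$ and (ii) becomes an exact $Y$-side degree in $H$. Write $X=\{x_0,\dots,x_{t-1}\}$ and $Y=\{y_0,\dots,y_{s-1}\}$. Since $t\ge s>n\ge 2$, we have $1\le t-n<t$. For each $j\in\{0,\dots,s-1\}$ let $H$ contain the $t-n$ edges joining $y_j$ to the vertices $x_{j(t-n)},\,x_{j(t-n)+1},\,\dots,\,x_{j(t-n)+(t-n)-1}$, all subscripts reduced modulo $t$. In words: the color-$1$ neighborhood of $y_j$ is a cyclic window of length $t-n$ inside $X$, and the $s$ windows are laid end to end around $X$.

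First I would check the $Y$-side condition. By construction $d_1(y_j)=t-n$ for every $j$ (the window has length $t-n<t$, so it selects $t-n$ distinct vertices of $X$); since $y_j$ is adjacent in $G$ to all $t$ vertices of $X$, this gives $d_2(y_j)=t-(t-n)=n$, which is (ii). Then I would handle the $X$-side bound, which is the crux. The $s$ windows correspond to the integer intervals $[m(t-n),(m+1)(t-n))$ for $m=0,\dots,s-1$, and these partition $[0,s(t-n))\cap\ZZ$. A vertex $x_i$ lies in the window of $y_j$ exactly when $[j(t-n),(j+1)(t-n))$ contains an integer $\equiv i\pmod t$; because each integer of $[0,s(t-n))$ lies in exactly one window and each window (having length $<t$) hits each residue class at most once, $d_1(x_i)$ equals the number of integers of $[0,s(t-n))$ congruent to $i$ modulo $t$, which is at most $\lceil s(t-n)/t\rceil$. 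Finally, since $s$ is an integer, $\lceil s(t-n)/t\rceil=\lceil s-sn/t\rceil=s-\lfloor sn/t\rfloor$, and the hypothesis $s-\lfloor sn/t\rfloor\le n$ gives $d_1(x_i)\le n$ for all $i$, establishing (i).

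I expect no serious obstacle here; the only step requiring genuine care is the residue count on the $X$-side, namely the verification that "the $s$ length-$(t-n)$ windows tile $[0,s(t-n))$" together with "reduction mod $t$" yields $d_1(x_i)=\#\{m\in[0,s(t-n))\cap\ZZ:\,m\equiv i\ (\mathrm{mod}\ t)\}$, and then bounding that count by $\lceil s(t-n)/t\rceil$. One should also confirm there is no off-by-one in matching $\lceil s(t-n)/t\rceil$ to the hypothesis, which is where the constraints $t-n\ge 1$ and $s>n$ (forcing $t>n$) keep all the floor/ceiling quantities well behaved. Note that the construction does not actually use $t\ge s$, only $t>n$, so one could remark on that, but it is not needed for the statement as given.
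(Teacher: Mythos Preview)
Your argument is correct and is essentially the same cyclic-window construction as the paper's: the paper assigns to each $y_i$ a length-$n$ cyclic block in $X$ as its color-$2$ neighborhood (so color $1$ goes to the complementary $t-n$ vertices), whereas you assign the length-$(t-n)$ complementary block directly as the color-$1$ neighborhood. The residue-counting step and the final identity $\lceil s(t-n)/t\rceil = s-\lfloor sn/t\rfloor$ match the paper's computation $d_1(v)=s-d_2(v)\le s-\lfloor sn/t\rfloor$.
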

\begin{proof}
    Let $X = \set{x_1,\ldots, x_t}$ and $Y = \set{y_1,\ldots, y_s}$. For each $i\in [s]$, let $y_i$ have edges of color 2 to vertices $x_{(i-1)n +1}, x_{(i-1)n +2}\ldots, x_{in}$ where the indices are taken modulo $t$. Color the remaining edges of $G$ with color 1. Condition (ii) is then satisfied by construction. To show condition (i), note that for all $v\in X$, we have $d_2(v)$ is either $\floor{\frac{sn}{t}}$ or $\ceiling{\frac{sn}{t}}$. So for all $v\in X$, $d_1(v) = s - d_2(v) \le s - \floor{\frac{sn}{t}}$ which is at most $n$ by assumption.
\end{proof}
Notice that when $t=s=2n$, this coloring gives two disjoint copies of $K_{n,n}$ in each color.

\begin{proof}[Proof of Example \ref{ex:small}] 
First suppose that $r$ is even.  Set $r=2k$ and set $N = (3k-1)n$. Partition the set of colors into two sets $A=\{1,\ldots,k\}$ and $B = \{k+1,\ldots 2k\}$. Also, let $A' = A\setminus\set{k}$ and $B' = B\setminus\set{2k}$  
We partition $X$ into $k$ sets $\{X_i:i\in A\}$, each of order $2n$ and $k-1$ sets $\{X_{j,2k}:j\in B'\}$, each of order $n$. Call a set ``single colored'' if it has one subscript and ``double colored'' if it has two. We similarly partition $Y$ into $k$ single colored sets $\{Y_j:j\in B\}$, each of order $2n$ and $k-1$ double colored sets $\{Y_{i,k}:i\in A'\}$, each of order $n$. The intention is that a vertex in a set $X_i$ (or $Y_i$) should have degree larger than $n$ in color $i$ and degree at most $n$ in all other colors. Likewise, vertices in $X_{i,j}$ (or $Y_{i,j}$) should have degree larger than $n$ in colors $i$ and $j$ and degree at most $n$ in all other colors.

Between $X_i$ and $Y_j$ we color as described in Lemma \ref{lem:2col} so that $d_{j}(v) \le n$ for all $v\in X_i$ and $d_i(v)\le n$ for all $v\in Y_j$. The hypothesis of the lemma is easy to check as both sets have order $2n$.  Color all the edges between $X_{j,2k}$ and $Y_i$ with color $j$ unless $j=i$ in which case we use color $2k$. Color all edges between $Y_{j,k}$ and $X_i$ with color $j$ unless $j=i$ in which case we use color $k$. Finally, color all edges between $Y_{i,k}$ and $X_{j, 2k}$ with color $i$. (See Figure \ref{fig:coloring})

\begin{figure}[ht]
    \includegraphics[width=\textwidth]{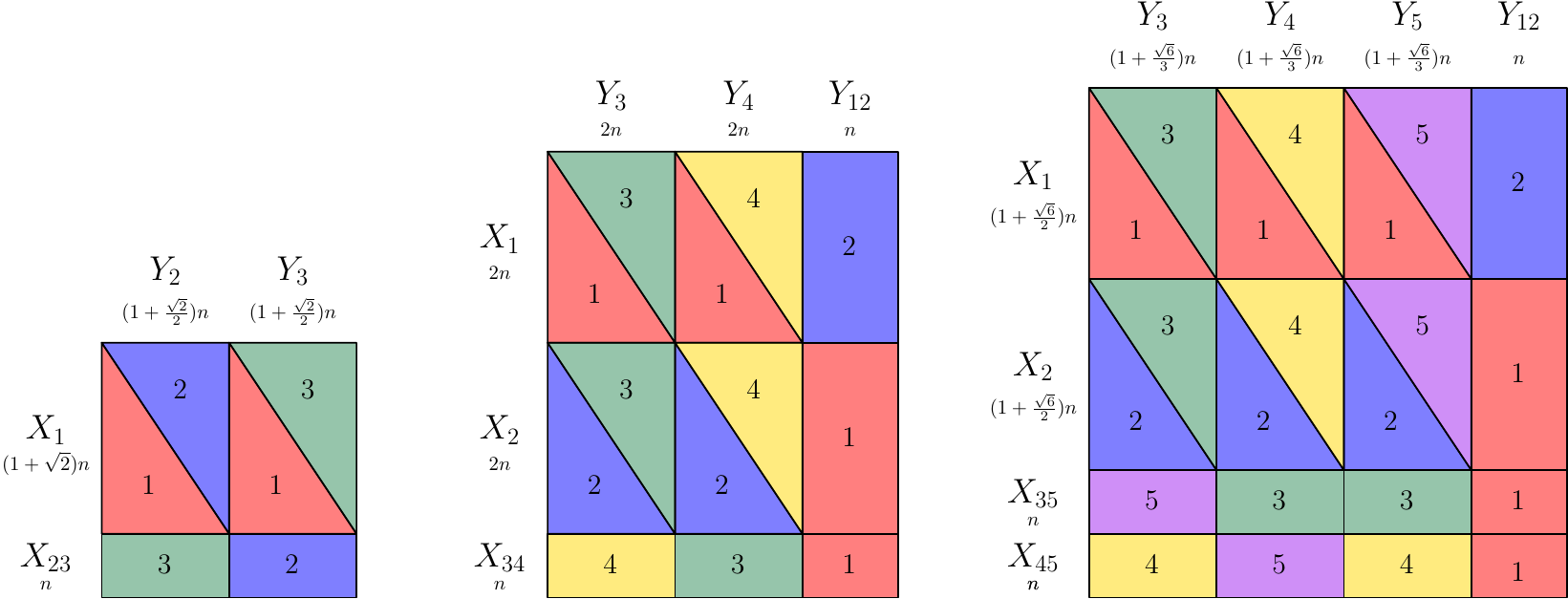}
    \caption{The edge coloring between sets used in the construction of the proof of Example \ref{ex:small} with three colors (left), four colors (center), and five colors (right). The size of each set is listed below the set name (ignoring floors and ceilings); for example, when $r=3$, $|X_1|= (1+\sqrt{2})n$.} 
    \label{fig:coloring}
\end{figure}

In this coloring, the monochromatic components incident to double colored vertices are all complete bipartite graphs with one side of order $n$. The monochromatic components between the $X_i$'s and $Y_j$'s are all of the type described in Lemma \ref{lem:2col}. Thus no monochromatic component contains a copy of $S_{n,n}$.

Now we consider the case when $r$ is odd. Set $r=2k-1$ and set $N =\floor{\alpha n}-k$ where $\alpha = r-1 + \frac{\sqrt{r^2 -1}}{2}$. We partition the set of colors into two sets $A = \set{1,\ldots, k-1}$ and $B = \set{k,\ldots, 2k-1}$. Let $A' = A\setminus\{k-1\}$ and $B' = B\setminus\{2k-1\}$. 
Now we partition $X$ into $k-1$ single colored sets $\set{X_i:i\in A}$, each of order $\ceiling{\frac{\alpha-(k-1)}{k-1}n}$ and $k-1$ double colored sets $\set{X_{j,2k-1}: j\in B'}$, each of order at most $n$. We partition $Y$ into $k$ single colored sets $\set{Y_j:j\in B}$, each of order $\floor{\frac{\alpha - (k-2)}{k}n}$ and $k-2$ double colored sets $\set{Y_{i, k-1}:i\in A'}$, each of order at most $n$.

Between $X_i$ and $Y_j$ we color as described in Lemma \ref{lem:2col} so that $d_{j}(v) \le n$ for all $v\in X_i$ and $d_i(v)\le n$ for all $v\in Y_j$. To check that the hypothesis of Lemma \ref{lem:2col} is satisfied in this case, first note that $|X_i| = \ceiling{ \frac{\alpha-(k-1)}{k-1}n} =\ceiling{ \frac{k-1 + \sqrt{k(k-1)}}{k-1}n} =\ceiling{ \of{1 + \sqrt{\frac{k}{k-1}}}n }$ and similarly, $|Y_j| = \floor{\of{1 + \sqrt{\frac{k-1}{k}}}n }$.
Thus
\begin{align*}
    |Y_j| - \bigfloor{\frac{|Y_j|n}{|X_i|}} 
    \le \bigfloor{ \of{1 + \sqrt{\frac{k-1}{k}}}n} - \bigfloor{ \frac{ \of{1 + \sqrt{\frac{k-1}{k}}}}{ \of{1 + \sqrt{\frac{k}{k-1}}}}n }
    = n.
\end{align*}
where the last equality holds since $\frac{ \of{1 + \sqrt{\frac{k-1}{k}}}}{ \of{1 + \sqrt{\frac{k}{k-1}}}}=\sqrt{\frac{k-1}{k}}$.

Now color all the edges between $X_{j,2k-1}$ and $Y_i$ with color $j$ unless $j=i$ in which case we use color $2k-1$. Color all edges between $Y_{j,k-1}$ and $X_i$ with color $j$ unless $j=i$ in which case we use color $k-1$. Finally, color all edges between $Y_{i,k-1}$ and $X_{j, 2k-1}$ with color $i$. As in the even case, this coloring contains no monochromatic $S_{n,n}$.
\end{proof}

Now we prove the main result of this section.

\begin{proof}[Proof of Theorem \ref{thm:bipmain}]
Let $N$ be an integer with $N\geq rn+1$ and set $\alpha=\frac{N}{n}$.  Suppose $K_{N,N}$ has been $r$-colored with no monochromatic $S_{n,n}$.  Later we will assume that $N$ is larger, but first we deduce some properties that hold when we simply have $N\geq rn+1$.

We color a vertex $v$ with color $i$ if $v$ is adjacent to at least $n+1$ edges of color $i$ (note that a vertex may receive more than one color and since $N\geq rn+1$, every vertex receives at least one color).  
For all $i\in [r]$, let $z_i$ be the number of vertices which receive exactly $i$ many colors.  For all $\emptyset\neq S\subseteq [r]$, let $X_S$ and $Y_S$ be the set of vertices in $X$ and $Y$ respectively which are colored with exactly the colors in $S$ and let $x_S=|X_S|$ and $y_S=|Y_S|$.  For all $i\in [r]$, let $\mathcal{X}_i$ and $\mathcal{Y}_i$ be the set of vertices in $X$ and $Y$ respectively which receive color $i$ (and possibly other colors). For $A\subseteq X$, $B\subseteq Y$, and  $S\subseteq [r]$, let $e_S(A,B)$ be the number of edges between $A$ and $B$ which receive any color from $S$.

We call an edge \emph{important} if it has color $i$ and is incident with a vertex of color $i$. The crucial part of this definition is the following observation. An important edge of color $i$ is incident to exactly one vertex of color $i$, otherwise this edge would form the central edge of a monochromatic $S_{n,n}$. Let $e^*$ be the number of important edges. Define $\sigma$ such that $\sigma^2$ is the proportion of edges which are \emph{not} important.  We have 
\begin{equation}\label{eq:sigma_r}
\sigma^2N^2 = \sum_{\emptyset\neq S_1,S_2\subseteq [r]}e_{[r]\setminus(S_1\cup S_2)}(X_{S_1}, Y_{S_2}) \geq \sum_{i\in [r]}x_iy_i.
\end{equation}

Note that by the definition of $z_i$, we have that for all $i\in [r]$ and all vertices $v$ which receive exactly $i$ colors, $v$ is incident with at least $N-(r-i)n$ important edges.  Thus we have the following bounds on $e^*$,
\begin{equation}\label{eq:e*}
\sum_{i\in [r]}z_i(N-(r-i)n)\leq e^*= (1-\sigma^2)N^2.
\end{equation}

Our first claim gives an upper bound on the number of vertices which are colored with more than one color.  Note that a higher proportion of non-important edges causes a smaller proportion of the vertices to have more than one color.

\begin{claim}\label{clm:doubles}
$$\sum_{i=2}^rz_i\leq (2r-2-\alpha(1+\sigma^2))N$$
\end{claim}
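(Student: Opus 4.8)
The plan is to extract Claim~\ref{clm:doubles} directly from the lower bound on $e^*$ in~\eqref{eq:e*}, together with the single combinatorial fact that every vertex of $K_{N,N}$ receives at least one color, i.e. $\sum_{i\in[r]}z_i=2N$. No further structural input seems necessary.

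First I would rearrange~\eqref{eq:e*}. Writing $\sum_{i\in[r]}z_i(N-(r-i)n)=N\sum_{i}z_i-n\sum_{i}z_i(r-i)=2N^2-n\sum_{i}z_i(r-i)$ and plugging this into $\sum_{i}z_i(N-(r-i)n)\le(1-\sigma^2)N^2$ gives
\begin{equation*}
n\sum_{i\in[r]}z_i(r-i)\ \ge\ 2N^2-(1-\sigma^2)N^2\ =\ (1+\sigma^2)N^2,
\end{equation*}
so, using $N=\alpha n$, we obtain the lower bound $\sum_{i\in[r]}z_i(r-i)\ge(1+\sigma^2)\alpha N$.

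Next I would bound the same sum from above in terms of $\sum_{i=2}^r z_i$. Using $\sum_i i\,z_i=\sum_i z_i+\sum_{i\ge2}(i-1)z_i\ge 2N+\sum_{i\ge2}z_i$ (each vertex with at least two colors contributes at least one extra unit), we get
\begin{equation*}
\sum_{i\in[r]}z_i(r-i)\ =\ r\sum_{i}z_i-\sum_{i}i\,z_i\ =\ 2rN-\sum_i i\,z_i\ \le\ 2rN-2N-\sum_{i=2}^r z_i.
\end{equation*}
Combining the two bounds yields $(1+\sigma^2)\alpha N\le (2r-2)N-\sum_{i=2}^r z_i$, which rearranges to exactly $\sum_{i=2}^r z_i\le\big(2r-2-\alpha(1+\sigma^2)\big)N$.

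I do not expect a genuine obstacle here: the whole argument is a two-line manipulation of~\eqref{eq:e*}. The only point requiring a little care is the bookkeeping of which quantities are being bounded above versus below — in particular making sure the elementary inequality $\sum_i i\,z_i\ge 2N+\sum_{i\ge2}z_i$ is used in the correct direction so that it combines cleanly with the $e^*$ lower bound. Everything else is routine arithmetic with $N=\alpha n$.
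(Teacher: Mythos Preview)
Your proposal is correct and is essentially the same argument as the paper's. The paper expands \eqref{eq:e*} via the substitution $N-(r-i)n=(N-(r-1)n)+(i-1)n$ together with $\sum_i z_i=2N$ to obtain the intermediate bound $\sum_{i=2}^r z_i\le z_2+2z_3+\dots+(r-1)z_r\le(2r-2-\alpha(1+\sigma^2))N$; your inequality $\sum_i i\,z_i\ge 2N+\sum_{i\ge2}z_i$ is just the step $\sum_{i\ge2}(i-1)z_i\ge\sum_{i\ge2}z_i$ applied at a slightly different point in the same chain of rearrangements.
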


\begin{proofclaim}
Expanding, canceling, and simplifying \eqref{eq:e*} gives
\[
\sum_{i=2}^rz_i\leq z_2+2z_3+\dots+(r-1)z_r\leq (2r-2-\alpha(1+\sigma^2))N.\qedhere
\]
\end{proofclaim}

The next claim gives an absolute upper bound on the order of an individual set $X_i$ or $Y_i$.

\begin{claim}\label{clm:XC_YC} For all $i\in [r]$ we have 
$x_i\leq \frac{N}{\alpha-(r-1)}$ and 
$y_i\leq \frac{N}{\alpha-(r-1)}$.
\end{claim}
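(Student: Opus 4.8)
The plan is to bound $x_i = |X_{\{i\}}|$ by double-counting the color-$i$ edges incident to the set $X_{\{i\}}$ of vertices colored with exactly the single color $i$, and symmetrically for $y_i$. First I would record a lower bound: every $u\in X_{\{i\}}$ satisfies $d_j(u)\le n$ for each $j\in[r]\setminus\{i\}$ by the definition of being colored with exactly $\{i\}$, so $d_i(u)\ge N-(r-1)n$; summing over $X_{\{i\}}$ (and using that the graph is bipartite, so every color-$i$ edge at $u$ goes to $Y$) gives
\[
e_i(X_{\{i\}},Y)=\sum_{u\in X_{\{i\}}}d_i(u)\ \ge\ x_i\bigl(N-(r-1)n\bigr).
\]

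Next I would argue that all of these color-$i$ edges in fact land in $Y\setminus\mathcal{Y}_i$. This is exactly the important-edge observation already established: a color-$i$ edge incident to $u\in X_{\{i\}}$ is important (it is incident to the color-$i$ vertex $u$), hence it is incident to \emph{exactly} one vertex of color $i$, so its other endpoint $w$ is not in $\mathcal{Y}_i$ (otherwise $u$ and $w$ would be centers of a monochromatic $S_{n,n}$, with the $n$ color-$i$ leaves of $u$ chosen in $Y\setminus\{w\}$ and those of $w$ chosen in $X\setminus\{u\}$ — possible since $d_i(u),d_i(w)\ge N-(r-1)n\ge n+1$, and there is no conflict because the two leaf-sets lie on opposite sides). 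Therefore $e_i(X_{\{i\}},Y)=e_i(X_{\{i\}},Y\setminus\mathcal{Y}_i)$. Finally, each $w\in Y\setminus\mathcal{Y}_i$ has $d_i(w)\le n$ by definition of $\mathcal{Y}_i$, so $e_i(X_{\{i\}},Y\setminus\mathcal{Y}_i)\le |Y\setminus\mathcal{Y}_i|\,n\le Nn$.

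Combining the three estimates yields $x_i\bigl(N-(r-1)n\bigr)\le Nn$, and since $N\ge rn+1>(r-1)n$ we may divide to obtain
\[
x_i\ \le\ \frac{Nn}{N-(r-1)n}\ =\ \frac{N}{\alpha-(r-1)},
\]
using $\alpha=N/n$. The identical argument with the roles of $X$ and $Y$ exchanged gives $y_i\le \frac{N}{\alpha-(r-1)}$. There is no real obstacle here: the one step with content — that the color-$i$ edges out of $X_{\{i\}}$ avoid $\mathcal{Y}_i$ — is immediate from the important-edge observation, and the rest is routine counting.
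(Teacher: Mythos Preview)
Your proposal is correct and follows essentially the same approach as the paper's proof: double-count the color-$i$ edges out of $X_{\{i\}}$, use the important-edge observation to conclude they all land in $Y\setminus\mathcal{Y}_i$, and bound from above via $d_i(w)\le n$ for $w\notin\mathcal{Y}_i$. The paper records the slightly sharper intermediate bound $e_i(X_i,Y\setminus\mathcal{Y}_i)\le n(N-|\mathcal{Y}_i|)$ before relaxing to $nN$, but this makes no difference for the claim as stated.
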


\begin{proofclaim}
For all $i\in [r]$ we have 
$$x_i(N-(r-1)n)\leq e_i(X_i, Y) = e_i(X_i, Y-\mathcal{Y}_i)\leq n(N-|\mathcal{Y}_i|),$$
and thus 
$$
x_i\leq \frac{N -|\mathcal{Y}_i|}{\alpha-(r-1)} \le \frac{N}{\alpha-(r-1)} .  
$$
Likewise for $y_i$.
\end{proofclaim}

The final claim gives an upper bound on the number of vertices which receive exactly one color.  

\begin{claim}\label{clm:bigC}
Let $C\in \mathbb{R}^+$.  If there are exactly $t$ indices $i\in [r]$ such that $\max\{x_i,y_i\}\geq \frac{\sigma N}{C}$, then 
$$z_1=\sum_{i\in [r]}(x_i+y_i)\leq \left(\frac{t}{\alpha-(r-1)}+(r-t)\frac{\sigma}{C}+C\sigma\right)N.$$
\end{claim}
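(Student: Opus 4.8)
To prove Claim \ref{clm:bigC}, the plan is to bound the contribution $x_i+y_i$ of each colour $i\in[r]$ separately, splitting $x_i+y_i=\max\{x_i,y_i\}+\min\{x_i,y_i\}$ and controlling the minimum via the identity $\min\{x_i,y_i\}=\frac{x_iy_i}{\max\{x_i,y_i\}}$, so that every error term is a multiple of $x_iy_i$ and the whole collection of error terms can be handled at the end by a single appeal to \eqref{eq:sigma_r}. Write $M=\frac{\sigma N}{C}$, and note first that $z_1=\sum_{i\in[r]}(x_i+y_i)$, since a vertex receiving exactly one colour lies in exactly one of the pairwise disjoint sets $X_{\{i\}},Y_{\{i\}}$ ($i\in[r]$).

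The core step is the per-index estimate $x_i+y_i\leq\beta_i+\frac{x_iy_i}{M}$, where $\beta_i=\frac{N}{\alpha-(r-1)}$ if $i$ is one of the $t$ \emph{big} indices (those with $\max\{x_i,y_i\}\geq M$) and $\beta_i=M$ otherwise. For a big index, Claim \ref{clm:XC_YC} gives $\max\{x_i,y_i\}\leq\frac{N}{\alpha-(r-1)}$, and since $\max\{x_i,y_i\}\geq M$ we get $\min\{x_i,y_i\}=\frac{x_iy_i}{\max\{x_i,y_i\}}\leq\frac{x_iy_i}{M}$; adding these two bounds gives the estimate. For a small index, where $x_i<M$ and $y_i<M$, the estimate is just a rearrangement of the inequality $(M-x_i)(M-y_i)\geq0$.

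Summing the per-index estimate over $i\in[r]$ then gives
\[
z_1\;\leq\;\frac{tN}{\alpha-(r-1)}+(r-t)M+\frac{1}{M}\sum_{i\in[r]}x_iy_i,
\]
and \eqref{eq:sigma_r} gives $\sum_{i\in[r]}x_iy_i\leq\sigma^2N^2$, so the final term is at most $\frac{\sigma^2N^2}{M}=C\sigma N$. Substituting $M=\frac{\sigma N}{C}$ into the middle term yields exactly $z_1\leq\left(\frac{t}{\alpha-(r-1)}+(r-t)\frac{\sigma}{C}+C\sigma\right)N$, completing the proof.

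The subtlety I anticipate lies in the accounting of the core step: the naive bound $x_i+y_i\leq2M$ for a small index would cost a spurious factor of $2$ on the $(r-t)\frac{\sigma}{C}N$ term, so one must route the excess $\min\{x_i,y_i\}$ into the single global sum $\sum_i x_iy_i$ rather than bounding $x_i$ and $y_i$ individually; the identity $\min=x_iy_i/\max$ (for big indices) together with $(M-x_i)(M-y_i)\geq0$ (for small indices) is exactly what permits this. One should also record that $\alpha-(r-1)\geq1+1/n>0$, since $\alpha=N/n\geq r+1/n$, so the denominators appearing above are all positive.
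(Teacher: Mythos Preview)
Your argument is essentially identical to the paper's: the same splitting into big and small indices, the same bound $\min\{x_i,y_i\}\le x_iy_i/M$ for big indices via $\max\ge M$, the same bound $x_i+y_i\le M+x_iy_i/M$ for small indices (your $(M-x_i)(M-y_i)\ge 0$ is exactly the paper's ``$a+b\le 1+ab$ for $0\le a,b\le 1$'' after rescaling), and the same final appeal to \eqref{eq:sigma_r}. The one omission is the degenerate case $\sigma=0$, where $M=0$ and your divisions by $M$ are undefined; the paper treats this separately by observing that $\sigma=0$ forces $x_iy_i=0$ for every $i$ (via \eqref{eq:sigma_r}), so $\min\{x_i,y_i\}=0$ and Claim~\ref{clm:XC_YC} alone gives $z_1\le \frac{r}{\alpha-(r-1)}N$, which is the claimed bound since $t=r$ when $M=0$.
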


\begin{proofclaim}
First note that if $\sigma =0$, then $x_i > 0$ implies that $y_i = 0$ and vice versa. Hence Claim \ref{clm:XC_YC} implies that $z_1 \le \frac{r}{\alpha-(r-1)} N $ and so the claim holds in this case. So we may assume that $\sigma>0$ for the remainder.
Without loss of generality, suppose that $\max\{x_i,y_i\}\geq \frac{\sigma N}{C}$ for all $i\in [t]$ and $\max\{x_i,y_i\}< \frac{\sigma N}{C}$ for all $i\in [r]\setminus [t]$.  

Note that for all $i\in [t]$, we have $\max\{x_i,y_i\}\min\{x_i,y_i\}=x_iy_i$ and since $i\in [t]$, we have $\max\{x_i,y_i\}\geq \frac{\sigma N}{C}$ and thus
\begin{equation}\label{eq:min}
\min\{x_i,y_i\}\leq \frac{x_iy_i}{\frac{\sigma N}{C}}
\end{equation}

For all $i\in [r]\setminus [t]$, we have $\max\{x_i,y_i\}<\frac{\sigma N}{C}$ and thus $\frac{x_i}{\frac{\sigma N}{C}}, \frac{y_i}{\frac{\sigma N}{C}}<1$.  From this (and the fact that for all real numbers $0\leq a,b\leq 1$, we have $a+b\leq 1+ab$) we have 
\begin{equation}\label{eq:r-t}
x_i+y_i\leq \frac{\sigma N}{C}+\frac{x_iy_i}{\frac{\sigma N}{C}}.
\end{equation}

Using \eqref{eq:min} and \eqref{eq:r-t} together with Claim \ref{clm:XC_YC}, we have
\begin{align*}
z_1=\sum_{i\in [r]}(x_i+y_i)&=\sum_{i\in [t]}(\max\{x_i,y_i\}+\min\{x_i,y_i\})+\sum_{i\in [r]\setminus [t]}(x_i+y_i)\\
&\leq \frac{t}{\alpha-(r-1)}N+\sum_{i\in [t]}\frac{x_iy_i}{\frac{\sigma N}{C}}+\sum_{i\in [r]\setminus [t]}\frac{\sigma N}{C}+\frac{x_iy_i}{\frac{\sigma N}{C}}\\
&=\frac{t}{\alpha-(r-1)}N+(r-t)\frac{\sigma N}{C}+\sum_{i\in [r]}\frac{x_iy_i}{\frac{\sigma N}{C}}\\
&\stackrel{\eqref{eq:sigma_r}}{\leq} \frac{t}{\alpha-(r-1)}N+(r-t)\frac{\sigma N}{C}+C\sigma N,
\end{align*}
as desired.
\end{proofclaim}

Now we prove part (i) of Theorem \ref{thm:bipmain}.  Let $N$ be an integer with $N>\left(\frac{3r-5 + \sqrt{r^2-2r+9}}{2}\right)n$, set $\alpha=\frac{N}{n}$, and note that 
\begin{equation}\label{eq:alpha}
\alpha>\frac{3r-5 + \sqrt{r^2-2r+9}}{2}.
\end{equation}
We now combine Claim \ref{clm:doubles} and Claim \ref{clm:bigC} to get a contradiction with \eqref{eq:alpha}.

\noindent
\tbf{Case 1} ($\sigma =0$) 
Applying Claim \ref{clm:bigC} (with say $C=1$) we see that since $\sigma=0$ we have that there are exactly $r$ indices with $i\in [r]$ such that $\max\{x_i,y_i\}\geq 0=\frac{\sigma N}{C}$ and thus Claim \ref{clm:bigC} together with Claim \ref{clm:doubles} gives 
 \begin{align*}
2N=z_1+\sum_{i=2}^rz_i&\leq \frac{r}{\alpha-(r-1)}N+(2r-2-\alpha)N= \left(\frac{r}{\alpha-(r-1)}+2r-2-\alpha\right)N
 \end{align*}
 which contradicts \eqref{eq:alpha}.

\noindent
\tbf{Case 2} ($\sigma >0$) 

Set $C=(\alpha-(r-1))\sigma$.
Let $t$ be the number of indices where $\max\{x_i, y_i\}\geq \frac{\sigma N}{C}=\frac{N}{\alpha-(r-1)}$.  Now Claim \ref{clm:bigC} (with $C=(\alpha-(r-1))\sigma)$) and Claim \ref{clm:doubles} implies
\begin{align*}
2N=z_1+\sum_{i=2}^rz_i&\leq \left(\frac{t}{\alpha-(r-1)}+(r-t)\frac{\sigma }{C}+C\sigma\right)N +(2r-2-\alpha(1+\sigma^2))N\\
&= \left(\frac{t}{\alpha-(r-1)}+\frac{r-t}{\alpha-(r-1)} + (\alpha-(r-1))\sigma^2 + 2r-2-\alpha(1+\sigma^2)\right)N\\
&=\left(\frac{r}{\alpha-(r-1)}+2r-2-\alpha-\sigma^2(r-1)\right)N\\
&\leq \left(\frac{r}{\alpha-(r-1)}+2r-2-\alpha\right)N
\end{align*}
which, as before, contradicts \eqref{eq:alpha}.

Now we prove part (ii) of Theorem \ref{thm:bipmain}. Let $N$ be an integer with $N\geq 3.6678n$, set $\alpha=\frac{N}{n}$, and note that $\alpha\geq 3.6678$.  (The exact bound we will get from our calculations is actually the largest of the three real solutions to the cubic polynomial $4\alpha^3-20\alpha^2+19\alpha+2=0$.  However, the exact form of this solution is quite ugly, so we give the approximation $3.6678$ instead).  

First note that for any positive integer $k$, 
\begin{equation}\label{eq:maxsig}
\sigma(k-\alpha\sigma)\leq \frac{k^2}{4\alpha} 
\end{equation}
with the maximum occurring when $\sigma=\frac{k}{2\alpha}$.

When $\sigma=0$ we do the same as above, but note that since $r=3$, there is one side, say $X$, in which at most one of $\{X_1, X_2, X_3\}$ is non-empty.  This fact together with Claim \ref{clm:XC_YC} and Claim \ref{clm:doubles} implies  
\begin{align*}
N=|X|\leq \frac{1}{\alpha-2}N+(4-\alpha)N, 
\end{align*}
which is a contradiction when $\alpha>\frac{5+\sqrt{5}}{2}\approx 3.618$.

When $\sigma>0$, Claim \ref{clm:bigC} (with $C=1$) and Claim \ref{clm:doubles} imply
\begin{equation}\label{eq:r=3}
2N=z_1+(z_2+z_3)\leq (\frac{t}{\alpha-2}+(3-t)\sigma +\sigma )N+(4-\alpha(1+\sigma^2))N.
\end{equation}

If $t=0$, then \eqref{eq:r=3} simplifies to
\begin{align*}
2N=z_1+(z_2+z_3)\leq 4\sigma N+(4-\alpha(1+\sigma^2))N&=(4-\alpha+\sigma(4-\alpha\sigma))N\\
&\stackrel{\eqref{eq:maxsig}}{\leq} (4-\alpha+\frac{4}{\alpha})N,
\end{align*}
which is a contradiction when $\alpha>1+\sqrt{5}\approx 3.2361$.

When $t\geq 1$, note that there is some set $W\in \{X_1, X_2, X_3, Y_1, Y_2, Y_3\}$ which has order at least $\sigma N$.  So by Claim \ref{clm:XC_YC}, we have $\sigma N\leq  |W|\leq \frac{1}{\alpha-2}N,$ and thus
 \begin{equation}\label{eq:sig_upper}
 \sigma\leq \frac{1}{\alpha-2}.
 \end{equation}

Now when $1\leq t\leq 2$, \eqref{eq:r=3} gives us
\begin{align*}
2N=z_1+(z_2+z_3)&\leq (\frac{t}{\alpha-2}+(3-t)\sigma +\sigma )N+(4-\alpha(1+\sigma^2))N\\
&\stackrel{\eqref{eq:sig_upper}}{\leq} \left(\frac{2}{\alpha-2}+4-\alpha+\sigma(2-\alpha\sigma)\right)N\\
&\stackrel{\eqref{eq:maxsig}}{\leq}\left(\frac{2}{\alpha-2}+4-\alpha+\frac{1}{\alpha}\right)N
\end{align*}
which is a contradiction when $\alpha>\frac{3+\sqrt{17}}{2}\approx 3.5616$.

Finally when $t=3$, we may suppose without loss of generality that $x_1$, $y_2$, and $y_3$ are at least $\sigma N$. 
Thus \[\sigma^2 N^2 \ge \sum_{i=1}^3 x_iy_i = \sum_{i=1}^3 \min\{x_i,y_i\}\max\{x_i,y_i\} \ge  \sigma N\sum_{i=1}^3 \min\{x_i,y_i\}, \]
which implies
\begin{equation}\label{eq:x2x3}
x_2+x_3\leq y_1+x_2+x_3\leq \sigma N. 
\end{equation}

Now by Claim \ref{clm:XC_YC} and \eqref{eq:x2x3} we have
\begin{align*}
N=|X|=x_1+(x_2+x_3)&\leq \frac{N}{\alpha-2}+\sigma N+(4-\alpha(1+\sigma^2))N\\
&= (\frac{1}{\alpha-2}+4-\alpha+\sigma(1-\alpha\sigma))N\\
&\stackrel{\eqref{eq:maxsig}}{\leq} (\frac{1}{\alpha-2}+4-\alpha+\frac{1}{4\alpha})N
\end{align*}
which is a contradiction when $\alpha\geq 3.6678$.
\end{proof}

\section{Conclusion and open problems}\label{sec:conclusion}

Aside from improving the bounds for balanced double stars, one of the directions of further study would be to consider the case of unbalanced stars.

Grossman, Harary and Klawe \cite{GHK} proved that
\[ R(S_{n, m}) = 
    \begin{cases}
        \max\{2n+1, n+2m+2\}, & \text{if $n$ is odd and $m\leq 2$}\\
        \max\{2n+2, n+2m+2\}, & \text{if $n$ is even or $m\geq 3$, and $n\leq \sqrt{2}m$ or $n\geq 3m$ }
        \end{cases}
  \]
and they conjectured that their result should also hold in the range when $\sqrt{2}m<n<3m$.  However, Norin, Sun, and Zhao \cite{NSZ} disproved this conjecture -- a particular case of interest is when $n=2m$ and in this case, they showed that $S_{2m, m}\geq 4.2m$.  Very recently, Flores Dub\'o and Stein \cite{FS} proved that $S_{2m, m}\leq 4.275m$.  See \cite{NSZ,FS} for a discussion about the best known bounds in general when $\sqrt{2}m<n<3m$.

In the multicolor case, Ruotolo and Song \cite{RS} proved the following.
\begin{theorem}
Let $n\geq m\geq 1$ and $r\geq 1$ be integers with $m= O(\frac{n}{r^2})$.  
\begin{enumerate}
\item If $r$ is odd, then $R_r(S_{n,m})=rn+m+2$.  
\item If $r$ is even, then $\max\{rn+1,~ (r-1)n+2m+2\}\leq R_r(S_{n,m})\leq rn+m+2$.  
\end{enumerate}
\end{theorem}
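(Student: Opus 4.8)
The plan is to prove $R_r(S_{n,m})\le rn+m+2$ for all $r$ and to obtain each lower bound from a blow-up construction. For $r$ odd the blow-up uses all $rn+m+1$ available vertices, so together with the upper bound it determines $R_r(S_{n,m})$ exactly; for $r$ even only the two weaker bounds $rn+1$ and $(r-1)n+2m+2$ come out and a gap to $rn+m+2$ remains. The hypothesis $m=O(n/r^{2})$ is used mainly in the upper bound (to make the majority colour class very close to the Erd\H os--S\'os extremal value) and, mildly, for the even-$r$ construction; it implies $n\ge 2m+1$ and $n=\Omega(r^{2})$.

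\textbf{The easy lower bounds.} For $R_r(S_{n,m})\ge rn+1$, blow up a (near-)$1$-factorization of $K_r$: partition $K_{rn}$ into $r$ blobs of order $n$, colour the edges between two blobs according to a proper edge-colouring of the reduced $K_r$, and place all within-blob cliques in one remaining colour (an $r$-th colour when $r$ is even; when $r$ is odd the near-$1$-factorization already leaves, in colour $i$, one blob untouched by colour-$i$ crossing edges). Each colour class is then a disjoint union of copies of $K_{n,n}$ and $K_n$, and neither contains $S_{n,m}$, since $K_{n,n}$ has no vertex of degree $n+1$ and $K_n$ has fewer than $n+m+2$ vertices. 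For even $r$ the same scheme with one blob $A$ of order $2m+1$ instead of $n$ gives $R_r(S_{n,m})\ge (r-1)n+2m+2$: the new components are copies of $K_{2m+1,n}$ and $K_{2m+1}$, which are $S_{n,m}$-free because $n\ge 2m+1$ (so $K_{2m+1,n}$ has no usable degree-$n$ centre) and $2m+1<n+m+2$.

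\textbf{The exact construction for odd $r$.} Take blobs $P_1,\dots,P_r$ of order $n$ and a set $W$ with $|W|=m+1$, a total of $rn+m+1$ vertices. Since $r$ is odd, $\chi'(K_r)=r$, and in a proper $r$-edge-colouring of the reduced $K_r$ colour $i$ is a near-perfect matching missing exactly one blob, which we relabel $P_i$. Colour every edge inside $P_i$ and every edge between $W$ and $P_i$ with colour $i$ --- so each $w\in W$ is completely joined, in colour $i$, to all of $P_i$, for every $i$ --- and colour all edges inside $W$ with colour $1$. Then colour $i$ consists of $\tfrac{r-1}{2}$ vertex-disjoint copies of $K_{n,n}$ (the blown-up matching) together with one further component, supported on $P_i\cup W$, which is a spanning subgraph of $K_{n+m+1}$ and hence has $n+m+1<|V(S_{n,m})|$ vertices. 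Thus this colouring of $K_{rn+m+1}$ has no monochromatic $S_{n,m}$, giving $R_r(S_{n,m})\ge rn+m+2$; the ``$+1$'' in $|W|=m+1$ is exactly what makes the absorbed clique fall one vertex short of $S_{n,m}$.

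\textbf{The upper bound and the main obstacle.} Suppose $K_N$, $N=rn+m+2$, is $r$-coloured with no monochromatic $S_{n,m}$, and order the colours so that $e(G_1)\ge\binom N2/r$. First, as in Observation~\ref{obs:ES} (via Lemma~\ref{lem:avdeg}), any graph on $N$ vertices with more than $\tfrac{(n+m)N}{2}$ edges contains $S_{n,m}$: pass to a subgraph of average degree $>n+m$ and minimum degree $>(n+m)/2$ (hence $\ge m+1$, as $n\ge m$), take a vertex $u$ of degree $\ge n+m+1$ and a neighbour $v$, reserve $m$ leaves for $v$ from $N(v)\setminus\{u\}$, and complete with $n$ of the $\ge n+m$ remaining neighbours of $u$. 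So each $G_i$ has at most $\tfrac{(n+m)N}{2}$ edges, and --- by the analogue of Observation~\ref{obs:SL} (in an $S_{n,m}$-free graph the vertices of degree $\ge n+m+1$ are independent with all neighbours of degree $\le m$) --- this value is approached only by disjoint unions of cliques $K_{n+m+1}$. Now $e(G_1)\ge\binom N2/r$ differs from $\tfrac{(n+m)N}{2}$ by only $O(Nn/r^{2})$ edges, so a stability version of this extremal bound should force $G_1$ to be close to such a clique partition, and in particular to contain a clique on some set $B$ with $|B|=n+m+1$. All $\binom{n+m+1}{2}$ pairs inside $B$ are then colour $1$, so $B$ is independent in each of the colours $2,\dots,r$; but each of those colour classes is itself close to a disjoint union of about $N/(n+m+1)\approx r$ cliques $K_{n+m+1}$, in which an independent set has only about $r$ vertices --- far fewer than $n+m+1$, since $n=\Omega(r^{2})$. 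This contradiction is the heart of the argument. The main obstacle is making the stability step rigorous: showing that a colour class within $O(Nn/r^{2})$ edges of $\mathrm{ex}(N,S_{n,m})$ must be genuinely close (in edit distance) to a partition into copies of $K_{n+m+1}$, and then upgrading ``close to'' into the exact structural facts used above; this is precisely where the hypothesis $m=O(n/r^{2})$ enters in full strength.
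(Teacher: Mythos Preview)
The paper does not prove this theorem at all: it is stated in the concluding section as a result of Ruotolo and Song \cite{RS} and is simply quoted without proof, so there is no argument in the paper to compare your proposal against.

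Assessing your proposal on its own terms: your lower-bound constructions are correct and are essentially the standard ones (the paper's concluding section attributes the bounds $rn+m+2$ and $(r-1)n+2m+2$ to \cite{RS} and $rn+1$ to the star lower bound). The blow-up of a near-one-factorization with the extra set $W$ of order $m+1$ is exactly the right picture for odd $r$, and your verification that every monochromatic component has at most $n+m+1$ vertices or maximum degree at most $n$ is sound.

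The upper bound, however, is not a proof but a heuristic, and you say so yourself. There are two genuine gaps. First, the stability statement you need --- that an $S_{n,m}$-free graph within $O(Nn/r^{2})$ edges of $\tfrac{(n+m)N}{2}$ must be close in edit distance to a disjoint union of copies of $K_{n+m+1}$ --- is neither standard nor obvious; the near-extremal $S_{n,m}$-free graphs are not only clique unions (one can also take graphs whose large-degree vertices form an independent set with all neighbours of degree $\le m$, as in Observation~\ref{obs:SL}), so even formulating the right stability statement takes care. Second, and more seriously, your final contradiction assumes that \emph{each} of the colours $2,\dots,r$ is itself near-extremal and hence close to a clique partition; but only the majority colour $G_1$ is guaranteed to have at least $\binom{N}{2}/r$ edges, and the remaining colours can be arbitrarily unbalanced among themselves. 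So the sentence ``each of those colour classes is itself close to a disjoint union of about $r$ cliques $K_{n+m+1}$'' is unjustified, and without it the argument does not close.
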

It would be interesting to see if their result extends to the case when $m=O(\frac{n}{r})$.  

We also note that our proof of Theorem \ref{thm:main1} actually gives a monochromatic copy of all double stars $S_{n_1,n_2}$ where $n_1+n_2=2n$ and (informally) $|n_1-n_2|$ is small enough.  

Regarding lower bounds for unbalanced double stars, we collect all of the examples from \cite{RS} and Example \ref{ex:path} below.
\begin{example}
Let $r\geq 3$ and $n\geq m\geq 1$ be integers.
\begin{enumerate}
\item If $r$ is odd and an affine plane of order $r-1$ exists, $$R_r(S_{n,m})\geq \max\{rn+m+2,~ 2(r-1)m+1,~ (r-1)(n+m)+1\}.$$
\item If $r$ is even and an affine plane of order $r-1$ exists, $$R_r(S_{n,m})\geq \max\{rn+1,~ (r-1)n+2m+2,~ 2(r-1)m+1,~ (r-1)(n+m)+1\}.$$
\end{enumerate}
\end{example}

\begin{proof}~
\begin{itemize}
\item $rn+m+2$ comes from \cite[Theorem 2.1(a)]{RS}
\item $(r-1)n+2m+2$ comes from \cite[Theorem 2.1(b)]{RS}
\item $rn+1$ comes from the lower bound on a star with $n$ leaves
\item $2(r-1)m+1$ comes from Example \ref{ex:path}(ii)
\item $(r-1)(n+m)+1$ comes from Example \ref{ex:path}(i)
\end{itemize}
\end{proof}

As mentioned in Corollary \ref{cor:DS}(i), it is known that if $2m\leq n$, then $R_r^{\bip}(S_{n,m})=rn+1$.  We studied the case when $n=m$. So it would be interesting to more carefully study the behavior of $R_r^{\bip}(S_{n,m})$ in the range $n< m<2n$.

\bibliographystyle{abbrv}
\bibliography{references}

\begin{thebibliography}{10}

\bibitem{BLS1}
M.~Buci{\'c}, S.~Letzter, and B.~Sudakov.
\newblock 3-color bipartite {R}amsey number of cycles and paths.
\newblock {\em Journal of Graph Theory}, 92(4):445--459, 2019.

\bibitem{BLS2}
M.~Bucic, S.~Letzter, and B.~Sudakov.
\newblock Multicolour bipartite {R}amsey number of paths.
\newblock {\em The Electronic Journal of Combinatorics}, 26(3), 2019.

\bibitem{DGKRS}
L.~DeBiasio, A.~Gy{\'a}rf{\'a}s, R.~A. Krueger, M.~Ruszink{\'o}, and G.~N.
  S{\'a}rk{\"o}zy.
\newblock Monochromatic balanced components, matchings, and paths in
  multicolored complete bipartite graphs.
\newblock {\em Journal of Combinatorics}, 11(1):35--45, 2020.

\bibitem{DS}
G.~Decamillis and Z.-X. Song.
\newblock Multicolor bipartite {R}amsey number of double stars.
\newblock {\em arXiv preprint arXiv:2312.03670}, 2023.

\bibitem{FSc}
R.~Faudree and R.~Schelp.
\newblock Path-path {R}amsey-type numbers for the complete bipartite graph.
\newblock {\em Journal of Combinatorial Theory, Series B}, 19(2):161--173,
  1975.

\bibitem{FS}
F.~Flores~Dub\'o and M.~Stein.
\newblock On the {R}amsey number of the double star.
\newblock {\em arXiv preprint arXiv:2401.01274}, 2023.

\bibitem{GG}
L.~Gerencs{\'e}r and A.~Gy{\'a}rf{\'a}s.
\newblock On {R}amsey-type problems.
\newblock {\em Ann. Univ. Sci. Budapest. E{\"o}tv{\"o}s Sect. Math},
  10:167--170, 1967.

\bibitem{GHK}
J.~W. Grossman, F.~Harary, and M.~Klawe.
\newblock Generalized {R}amsey theory for graphs, x: double stars.
\newblock {\em Discrete Mathematics}, 28(3):247--254, 1979.

\bibitem{G77}
A.~Gy{\'a}rf{\'a}s.
\newblock Partition coverings and blocking sets in hypergraphs.
\newblock {\em Communications of the Computer and Automation Institute of the
  Hungarian Academy of Sciences}, 71:62, 1977.

\bibitem{G11}
A.~Gy{\'a}rf{\'a}s.
\newblock Large monochromatic components in edge colorings of graphs: a survey.
\newblock {\em Ramsey Theory: Yesterday, Today, and Tomorrow}, pages 77--96,
  2011.

\bibitem{GL}
A.~Gy{\'a}rf{\'a}s and J.~Lehel.
\newblock A {R}amsey-type problem in directed and bipartite graphs.
\newblock {\em Period. Math. Hungar}, 3(3-4):299--304, 1973.

\bibitem{GRSS}
A.~Gy{\'a}rf{\'a}s, M.~Ruszink{\'o}, G.~N. S{\'a}rk{\"o}zy, and
  E.~Szemer{\'e}di.
\newblock Three-color {R}amsey numbers for paths.
\newblock {\em Combinatorica}, 27(1):35--69, 2007.

\bibitem{GS08}
A.~Gy{\'a}rf{\'a}s and G.~N. S{\'a}rk{\"o}zy.
\newblock Size of monochromatic double stars in edge colorings.
\newblock {\em Graphs and Combinatorics}, 24(6):531--536, 2008.

\bibitem{HJ}
J.~H. Hattingh and E.~J. Joubert.
\newblock Some bistar bipartite {R}amsey numbers.
\newblock {\em Graphs and Combinatorics}, 30:1175--1181, 2014.

\bibitem{KS}
C.~Knierim and P.~Su.
\newblock Improved bounds on the multicolor {R}amsey numbers of paths and even
  cycles.
\newblock {\em The Electronic Journal of Combinatorics}, pages P1--26, 2019.

\bibitem{LMP}
H.~Liu, R.~Morris, and N.~Prince.
\newblock Highly connected monochromatic subgraphs of multicolored graphs.
\newblock {\em Journal of Graph Theory}, 61(1):22--44, 2009.

\bibitem{M}
D.~Mubayi.
\newblock Generalizing the {R}amsey problem through diameter.
\newblock {\em The Electronic Journal of Combinatorics}, pages R42--R42, 2002.

\bibitem{NSZ}
S.~Norin, Y.~R. Sun, and Y.~Zhao.
\newblock Asymptotics of {R}amsey numbers of double stars.
\newblock {\em arXiv preprint arXiv:1605.03612}, 2016.

\bibitem{RS}
J.~Ruotolo and Z.-X. Song.
\newblock Multicolor {R}amsey number for double stars.
\newblock {\em arXiv preprint arXiv:2211.03642}, 2022.

\bibitem{S22}
G.~N. S{\'a}rk{\"o}zy.
\newblock Improved monochromatic double stars in edge colorings.
\newblock {\em Graphs and Combinatorics}, 38(3):78, 2022.

\bibitem{SYXL}
S.~Yongqi, Y.~Yuansheng, X.~Feng, and L.~Bingxi.
\newblock New lower bounds on the multicolor {R}amsey numbers $r_r(c_{2_m})$.
\newblock {\em Graphs and Combinatorics}, 22:283--288, 2006.

\end{thebibliography}

\end{document}